\documentclass[a4paper,11pt,twoside,openright]{article} 

\usepackage{amssymb}
\usepackage{amsthm}
\usepackage{amsmath}
\usepackage{array} 
\usepackage{graphicx}
\usepackage[a4paper,portrait]{geometry}
\geometry{left=2.5cm,right=2.5cm,top=2cm,foot=2cm}
\newtheorem{theo}{Theorem}[section]  

\newtheorem{rem}[theo]{Remark}
\newtheorem{lem}[theo]{Lemma}
\newtheorem{prop}[theo]{Proposition}
\newcommand{\R}{\mathbb{R}}

\newcommand{\N}{\mathbb{N}}

\newcommand{\E}{\mathbb{E}}

\title{Unbiased risk estimation method for covariance estimation}
\author{H\'el\`ene Lescornel
\footnote{Institut de Math\'ematiques de Toulouse
 UMR 5219
31062 Toulouse, Cedex 9, France. Email: lescornel@math.univ-toulouse.fr}, Jean-Michel Loubes\footnote{loubes@math.univ-toulouse.fr}, Claudie Chabriac\footnote{chabriac@univ-tlse2.fr}
}

\date{}
\begin{document}
\maketitle

\begin{abstract}
We consider a model selection estimator of the covariance of a random process. Using  the Unbiased Risk Estimation (URE) method, we build an estimator of the risk which allows to select an estimator in a collection of model. Then, we present an oracle inequality  which ensures that the risk of the selected estimator is close to the risk of the oracle. Simulations show the efficiency of this methodology.
\end{abstract}
{\bf Keywords:} covariance estimation, model selection, URE method.
\section{Introduction}
Estimating the covariance function  of stochastic
processes is a fundamental issue in statistics with many applications, ranging from  geostatistics,
financial series or epidemiology for instance (we refer to \cite{MR1697409}, 
\cite{MR0456314} or \cite{MR1239641} for general references). While parametric methods have been extensively studied in the
statistical literature (see \cite{MR1239641} for a review),
nonparametric procedures have only recently received  attention, see for instance \cite{MR2403106,MR2684389,BIGOT:2010:HAL-00440424:4,Bigot:arXiv1010.1601} and references therein. One of the main difficulty in this framework is to impose that the estimator is also a covariance function, preventing the direct use of usual nonparametric statistical methods.\\
\indent  In this paper, we propose to construct a non parametric estimator of the covariance function of a stochastic process by using a model selection procedure based on the Unbiased Risk Estimation (U.R.E.) method. We work under general assumptions on the process, that is, we do not assume Gaussianity nor stationarity of the observations.\vskip .1in

Consider a stochastic process $\left( X\left( t \right) \right)_{t \in T}$ taking its values in $\R$ and indexed by $T \subset\R^d$, $d \in \N$. We assume that $\E \left[ X\left( t \right) \right]=0$ $\forall t \in T$ and we aim at estimating its covariance function $ \sigma \left( s, t \right)= \E \left[X \left( s\right) X\left( t \right) \right] < \infty$ for all $ t, s \in T$. We assume we observe  $X_i\left(t_j \right)$ where  $i\in \left\lbrace 1 \dots n \right\rbrace$ and $t \in \left\lbrace 1 \dots n\right\rbrace$. Note that the observation points $t_j$ are fixed and that the $X_i$'s are independent copies of the process $X$. Set set $x_i=\left(X_i\left(t_1\right),\dots , X_i\left( t_p\right) \right) \forall i \in \left\lbrace 1 \dots n \right\rbrace$ and denote by $\Sigma$ the covariance matrix of these vectors. \vskip .1in
Following the methodology presented in \cite{MR2684389}, we approximate the process $X$ by its projection onto some finite dimensional model. For this, consider a countable set of functions $\left(g_{\lambda}\right)_{\lambda \in \Lambda }$ which may be for instance a basis of $L^2 \left( T\right)$ and choose a collection of models $\mathcal{M} \subset \mathcal{P}\left( \Lambda \right)$. For  $m \subset \mathcal{M}$,  a finite number of indices, the process can be approximated by 
$$X\left( t\right)   \approx \sum_{\lambda \in m} a_{ \lambda} g_{\lambda}\left( t\right). $$ Such an approximation leads to an estimator of $\Sigma$ depending on the collection of functions $m$, denoted by $\hat{\Sigma}_m$. Our objective is to select in a data driven way, the best model, i.e the one close to an oracle $m_0$ defined as a minimizer of the quadratic risk, namely $$m_0 \in \underset{m\in \mathcal{M}}{{\rm arg}\min}R\left(m\right) = \underset{m\in \mathcal{M}}{{\rm arg}\min} \E\left[ \left\Vert \Sigma - \hat{\Sigma}_m\right\Vert^ 2 \right].$$
A model selection procedure will be performed using  the U.R.E. method, which has been introduced in \cite{STE} and fully described in \cite{TSYB}. The idea is to find an estimator $\hat{R}\left(m \right)$ of the risk which is unbiased, and to select  $\hat{m}$ by minimizing this estimator. Hence, if $\hat{R}$ is close to its expectation, $\hat{\Sigma}_{\hat{m}}$ will be an estimator with a small risk, nearly as the best quantity $\hat{\Sigma}_{m_0}$. \vskip .1in

In this work, following the U.R.E. method, we build an estimator of the risk which allows to select an estimator of the covariance function. Then, we present an oracle inequality for the covariance estimator which ensures that the risk of the selected estimator is not too large with respect to the risk of the oracle. \vskip .1in

The paper is organized as follows. In Section~\ref{sec:stat frame} we present the statistical framework and  recall some useful algebraic tools for matrices. The following section, Section~\ref{sec:model selec} is devoted to the approximation of the process and the construction of the covariance estimator. Section~\ref{sec:ure} is devoted to the U.R.E. method, and provides an oracle inequality. Some numerical experiments are exposed in Section~\ref{sec:num}, while the proofs are postponed to the Appendix.

\section{The statistical framework}
\label{sec:stat frame}
Recall that we consider an $\R$-valued stochastic process, $X=\left(X\left( t\right)\right)_{t \in T}$, where $T$ is some subset of $\R^d$, $d \in \N$. We assume that $X$ has finite moments up to order 4 and zero mean. Our aim is to study the covariance function of $X$ denoted by $\sigma\left(s,t\right)=\E \left[ X\left( s\right) X\left( t \right) \right]$.

Let $X_1,... X_n$ be independent copies of the process $X$, and  assume that we observe these copies at some determinist points $t_1,...,t_p$ in $T$. We set $x_i=\left(X_i\left(t_1\right), \dots , X_i\left( t_p \right) \right)\top$, and denote the empirical covariance of the data by
$$ S = \frac{1}{n} \sum_{i=1}^n x_i x_i ^\top$$ with expectation $\Sigma = \left(\sigma\left( t_j,t_k\right)\right)_{1\leqslant j, k \leqslant p}$.\vskip .1in

Hence, the observation model can be written, in a matrix regression framework, as
\begin{equation}
\label{eq:modmatr}
x_ix_i^\top=\Sigma + U_i \quad \in \mathbb{R}^{p\times p} \quad, 1\leqslant i \leqslant n
\end{equation}
Where $U_i$ are i.i.d. error matrices with $\mathbb{E}\left[   U_i\right]  =0$.\vskip .1in
We now recall some notations related to the study of matrices, which will be used in the following.\\
Denote by $\mathcal{S}_t$ the subset composed of symmetric matrix in $\R^{t\times t}$. 

For any matrix $A=\left( a_{ij}\right)   \in \mathbb{R}^{s \times t}$,  $ \left\Vert A \right\Vert^2 = tr\left( AA^\top\right)  $ is the Frobenius norm of the matrix which is associated to the inner scalar product $\left\langle  A,B\right\rangle  =tr\left( AB^\top\right)  $. 

$A^- \in \mathbb{R}^{t \times s}$ is a reflexive generalized  inverse of $A$, that is, some matrix such as $A^-AA^-=A$ and$AA^-A= A^-$.

In the following, we will consider matrix
data as a natural extension of the vectorial data, with different
correlation structure. For this, we introduce a natural linear
transformation, which converts any matrix into a column vector. The
vectorization of a $k\times n$ matrix ${A}=(a_{ij})_{1\leq i\leq
k,1\leq j\leq n}$ is the $kn\times 1$ column vector denoted by $vec\left( 
{A}\right) $, obtained by stacking the columns of the matrix ${A}
$ on top of one another. That is $%
vec(A)=[a_{11},...,a_{k1},a_{12},...,a_{k2},...,a_{1n},...,a_{kn}]^{\top }$.%

If ${A=}(a_{ij})_{1\leq i\leq k,1\leq j\leq n}$ is a $k\times n$
matrix and $\mathbf{B=}(b_{ij})_{1\leq i\leq p,1\leq j\leq q}$ is a $p\times
q$ matrix, then the Kronecker product of the two matrices, denoted by $%
{A}\otimes {B}$, is the $kp\times nq$ block matrix%
\begin{equation*}
{A}\otimes {B=}%
\begin{bmatrix}
a_{11}{B} & . & . & . & a_{1n}{B} \\ 
. & . &  &  & . \\ 
. &  & . &  & . \\ 
. &  &  & . & . \\ 
a_{k1}{B} & . & . & . & a_{kn}{B}%
\end{bmatrix}%
.
\end{equation*}


For $A$, $B$ and $C$  some real matrices, we recall the following properties that will be useful in our settings.  
\begin{prop}
\label{rappel}
\begin{equation}
\label{rap:eq1}
vec\left( ABC\right)  =\left( C^\top \otimes A\right)   vec\left( B\right)  
\end{equation}
\begin{equation}
\label{rap:eq2}
\left\Vert A \right\Vert = \left\Vert vec\left( A\right)   \right\Vert = \left\Vert vec\left( A\right)   \right\Vert_{\ell_2}
\end{equation}
\begin{equation}
\label{rap:eq3}
\left( A \otimes B\right)  \left( C \otimes D\right)= \left(AC \right)\otimes \left(BD\right)
\end{equation}
\begin{equation}
\label{rap:eq4}
\left( A \otimes B\right)  ^\top= A^\top \otimes B^\top
\end{equation}
\end{prop}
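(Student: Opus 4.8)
The statement to prove is Proposition~\ref{rappel}, which collects four standard identities about the $vec$ operator and the Kronecker product. The plan is to prove each of the four equations in turn, reducing everything to direct index computations, since these are the kind of facts that are true essentially by unravelling the definitions given in the excerpt.

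First I would prove \eqref{rap:eq1}, the mixed-product identity $vec(ABC) = (C^\top \otimes A)\,vec(B)$, which is the workhorse and the one whose proof requires the most care. The cleanest route is to first establish the special case $vec(AB) = (I \otimes A)\,vec(B)$ by noting that the $j$-th column of $AB$ is $A$ times the $j$-th column of $B$, so stacking columns gives a block-diagonal action of $A$; and symmetrically $vec(BC) = (C^\top \otimes I)\,vec(B)$ by observing that the $j$-th column of $BC$ is a linear combination $\sum_k c_{kj}(\text{$k$-th column of }B)$, whose coefficients $c_{kj}$ are exactly the entries of $C^\top$ read in the right order. Then $vec(ABC) = vec\big(A(BC)\big) = (I\otimes A)\,vec(BC) = (I\otimes A)(C^\top\otimes I)\,vec(B)$, and an application of \eqref{rap:eq3} collapses $(I\otimes A)(C^\top\otimes I)$ to $(C^\top\otimes A)$. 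I expect the main obstacle to be purely bookkeeping: making sure the index ordering in the definition of $vec$ (columns stacked top to bottom, left to right) matches the block structure of the Kronecker product, so that the coefficient $c_{kj}$ lands in block row $j$, block column $k$ of $C^\top\otimes I$ as required.

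Next, \eqref{rap:eq2}: both $\|A\|$ (Frobenius) and $\|vec(A)\|_{\ell_2}$ are the square root of the sum of squares of all entries $a_{ij}$ of $A$, since $tr(AA^\top) = \sum_{i,j} a_{ij}^2$; the operator $vec$ merely relists these entries without repetition, so the two quantities are literally equal. This is a one-line verification. For \eqref{rap:eq3}, I would compute the $(i,j)$ block (of the appropriate size) of the left-hand side: the block decomposition of $A\otimes B$ and $C\otimes D$ means the block product is $\sum_\ell (a_{i\ell}B)(c_{\ell j}D) = \big(\sum_\ell a_{i\ell}c_{\ell j}\big)BD = (AC)_{ij}\,BD$, which is exactly the $(i,j)$ block of $(AC)\otimes(BD)$; one only needs the compatibility of the dimensions, which is assumed. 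Finally \eqref{rap:eq4} is immediate from the block form: transposing $A\otimes B$ swaps block $(i,j)$ with block $(j,i)$ and transposes the entry $a_{ij}B$ to $a_{ij}B^\top$, so the $(i,j)$ block of $(A\otimes B)^\top$ is $a_{ji}B^\top$, which is precisely the $(i,j)$ block of $A^\top\otimes B^\top$. None of these three is delicate; the only genuine work is in \eqref{rap:eq1}, and even there the difficulty is notational rather than conceptual. In fact, since all four identities are classical (see e.g. standard matrix-algebra references), one could alternatively just cite them, but the self-contained index-level argument above is short enough to include.
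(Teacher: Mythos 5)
Your proposal is correct. Note, however, that the paper does not actually prove Proposition~\ref{rappel}: it simply states the four identities and refers the reader to a standard matrix-algebra reference, so there is no proof in the paper to compare against. Your self-contained argument is the standard one and all four verifications are sound: the factorization $vec(ABC)=(I\otimes A)(C^{\top}\otimes I)\,vec(B)$ with the coefficient $c_{kj}$ landing in block row $j$, block column $k$ of $C^{\top}\otimes I$ is exactly right, and the block computations for \eqref{rap:eq3} and \eqref{rap:eq4} and the entrywise identification of $tr(AA^{\top})$ with $\sum_{i,j}a_{ij}^{2}$ for \eqref{rap:eq2} are immediate. The one point of logical hygiene worth making explicit is that your proof of \eqref{rap:eq1} invokes \eqref{rap:eq3}, so in a written-up version you should establish \eqref{rap:eq3} first (your proof of it is independent of \eqref{rap:eq1}, so there is no circularity, only an ordering issue). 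In short, your approach buys self-containedness at the cost of a page of index bookkeeping, whereas the paper's citation buys brevity; either is acceptable here.
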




These identities can be found in \cite{ALGMATR}.\vskip .1in

Let $m\in \mathcal{M}$, and recall that to the finite set $\left\{ g_{\lambda }\right\} _{\lambda \in m}$ of functions $g_{\lambda
}:T\rightarrow \mathbb{R}$ we associate the $n\times |m|$ matrix $\mathbf{G}$
with entries $g_{j\lambda }=g_{\lambda }\left( t_{j}\right) $, $j=1,...,n$, $%
\lambda \in m$. Furthermore, for each $t\in T$, we write $\mathbf{G}%
_{t}=\left( g_{\lambda }\left( t\right) ,\lambda \in m\right) ^{\top }$. For 
$k\in \mathbb{N}$, $\mathcal{S}_{k}$ denotes the linear subspace of $\mathbb{%
R}^{k\times k}$ composed of symmetric matrices. For $\mathbf{G\in }\mathbb{R}%
^{n\times |m|}$, $\mathcal{S}\left( \mathbf{G}\right) $ is the linear
subspace of $\mathbb{R}^{n\times n}$ defined by%
\begin{equation*}
\mathcal{S}\left( \mathbf{G}\right) =\left\{ \mathbf{G\Psi G}^{\top }:%
\mathbf{\Psi \in }\mathcal{S}_{m}\right\} \text{.}
\end{equation*}%
This set will be the natural projection space for the corresponding covariance estimator.


\section{Model selection approach}
\label{sec:model selec}
The estimation procedure is a two step procedure. First we consider a functional expansion of the process and  approximate it by its projection onto some finite collection of functions. Then, we construct a rule to pick out the best of these estimators among the collection of estimated,  based on the U.R.E. method.\vskip .1in
In this section, we explain  the construction of a projection based estimator for the covariance of a process and point out its properties.  More details can be  found in \cite{MR2684389}.\vskip .1in

Consider a process $X$ with an expansion on a set of functions $\left( g_\lambda\right) _{\lambda \in \Lambda}$ of the following form  $$X\left( t\right)  =\sum_{\lambda \in \Lambda} a_{ \lambda} g_{\lambda}\left( t\right)  $$
where $\Lambda$ is a countable set, and $\left( a_\lambda\right)  _{\lambda \in \Lambda}$ are random coefficients in $\R$ of the process $X$. \\
\indent This situation occurs in large number of cases. If we assume that the process takes its values in $L^2\left( T \right)$ or an Hilbert space, a natural choice of the functions is given by the corresponding Hilbert basis $\left( g_\lambda\right)  _{\lambda \in \Lambda}$  of $L^2\left( T \right)$.  Alternatively, the Karhunen-Loeve expansion of the covariance provides a natural basis. However, since it relies on the nature of the process $X$, this expansion is usually  unknown or require additional information on the process. We refer to \cite{ADL} for more references on this expansion. Under other kind of regularity assumptions on the process, for instance assuming  that the paths of the process  belong to some  RKHS,  other expansions can be considered as  in~\cite{CAI} for instance.

Now consider the projection of the process onto a finite number of functions. For this, let  $m$ be a finite subset of $\Lambda$ and  consider the  corresponding approximation of the process in the following form 
\begin{equation}
\tilde{X}\left( t\right)   = \sum_{\lambda \in m} a_{ \lambda} g_{\lambda}\left( t\right)  
\end{equation}

We note $G_m \in \mathbb{R}^{p \times \left\vert m \right\vert}$ where $\left( G_m\right)  _{j\lambda}= g_\lambda \left( t_j\right)  $ and $a_m$ the random vector of $\mathbb{R}^{|m|}$ with coefficients $\left( a_\lambda\right)  _{\lambda \in m}$.

Hence, we obtain that  
$$\tilde{x}=\left( \tilde{X}\left( t_1\right)  ,..,\tilde{X}\left( t_p\right)  \right)  ^\top = G_{m}a_m$$
and 
$$\tilde{x}\tilde{x}^\top = G_{m} a_m a_m^\top G_m^\top.$$

Thus, approximating the process $X$ by $\tilde{X}$ its projection onto the model $m$ implies  approximating the covariance matrix $\Sigma$ by $ G_{m} \Psi G_m^\top \quad \Psi \in \mathbb{R}^{|m|\times |m|}$
where $\Psi= \mathbb{E}\left[    a_ma_m^\top\right]$ is some symmetric matrix. With previous definitions, that amounts to saying that we want to choose an estimator in the subset $\mathcal{S}\left( G_m \right)$ for some subset $m$ of $\Lambda$.\vskip .1in


Assume that the subset $m$ is fixed. The best approximation of $\Sigma$ in  $\mathcal{S}\left( G_m \right)$  for the Frobenius norm is its projection denoted by $\Sigma_m$. But $\Sigma$ is unknown, hence we can not determinate this quantity. A natural idea is to study the projection of $S$ on $\mathcal{S}\left( G_m \right)$. We denote this quantity by $\hat{\Sigma}_m$.

Proposition 3.1 in \cite{MR2684389} gives an explicit form for these projections. We recall it for sake of completeness.

\begin{prop}
\label{proproj}
Let $A$ in $\mathbb{R}^{p \times p}$ and $G\in \mathbb{R}^{p \times |m|}$.
The infimum $$\inf \left\{\left\Vert A - \Gamma \right\Vert ; \Gamma \in \mathcal{S}\left( G\right)   \right\}$$
is achieved at $$\hat{\Gamma} = G \left( G^\top G\right)  ^- G^\top \left( \frac{A+A^\top}{2}\right) G  \left( G^\top G\right)  ^- G^\top $$
In particular, if $A\in \mathcal{S}_p$, the projection of $A$ on $\mathcal{S}\left( G\right)  $ is $ \Pi A \Pi$ with the projection matrix $\Pi=G \left( G^\top G\right)  ^- G^\top \in \mathbb{R}^{p\times p}$.

It amounts to saying that $\inf \left\{\left\Vert A - G \Psi G^\top \right\Vert ; \Psi \in \mathcal{S}_{|m|} \right\}$ is reached at $$\hat{\Psi} = \left( G^\top G\right)  ^- G^\top \left( \frac{A+A^\top}{2}\right)   G \left( G^\top G\right)  ^-.$$

\end{prop}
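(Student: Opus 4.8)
\emph{Proof plan.} The plan is to recognise $\hat\Gamma$ as the orthogonal projection of $A$ onto the linear subspace $\mathcal{S}(G)$ of $\mathbb{R}^{p\times p}$, for the Frobenius inner product $\langle\cdot,\cdot\rangle$, and then to make this projection explicit using elementary properties of a generalized inverse of $G^\top G$. First I would discard the antisymmetric part of $A$: writing $A=\frac{A+A^\top}{2}+\frac{A-A^\top}{2}$, the second summand is orthogonal for $\langle\cdot,\cdot\rangle$ to every symmetric matrix, hence to all of $\mathcal{S}(G)$, so for each $\Gamma\in\mathcal{S}(G)$
$$\left\|A-\Gamma\right\|^2=\left\|\tfrac{A+A^\top}{2}-\Gamma\right\|^2+\left\|\tfrac{A-A^\top}{2}\right\|^2 .$$
Thus the minimisation is unchanged when $A$ is replaced by $A_s:=\frac{A+A^\top}{2}$, and $A_s$ will be reinstated only at the very end; this also reduces the general statement to the case $A\in\mathcal{S}_p$, which is the ``in particular'' claim.

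The core step is to establish that $\Pi:=G(G^\top G)^-G^\top$ is the orthogonal projector of $\mathbb{R}^p$ onto the column space of $G$: it is independent of the chosen reflexive generalized inverse, and satisfies $\Pi^2=\Pi$, $\Pi^\top=\Pi$, $\Pi G=G$, and therefore $G^\top\Pi=G^\top$. I would obtain this from the standard facts that $\ker(G^\top G)=\ker G$ (so that $\mathrm{range}(G^\top G)=\mathrm{range}(G^\top)$) and from the defining relation $MM^-M=M$ with $M=G^\top G$, which forces $MM^-$ — and likewise $M(M^-)^\top$, since $(M^-)^\top$ is again a generalized inverse of the symmetric matrix $M$ — to fix $\mathrm{range}(M)=\mathrm{range}(G^\top)$ pointwise, yielding $G^\top G(G^\top G)^-G^\top=G^\top$ and, after transposition, $G(G^\top G)^-G^\top G=G$; independence of the inverse and symmetry of $\Pi$ then follow from these identities together with a dimension count ($\ker\Pi=\ker G^\top=\mathrm{range}(G)^\perp$). (Alternatively one may quote the well-known description of $G(G^\top G)^-G^\top$ through the singular value decomposition of $G$.) A consequence I will use is $\mathcal{S}(G)=\{\Pi B\Pi:\ B\in\mathcal{S}_p\}$: ``$\subseteq$'' because $G\Psi G^\top=\Pi(G\Psi G^\top)\Pi$, and ``$\supseteq$'' because $\Pi B\Pi=G\big((G^\top G)^-G^\top BG(G^\top G)^-\big)G^\top$ (the bracket being made symmetric, without changing $\Pi B\Pi$, if the chosen inverse is not symmetric).

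Now take $A$ symmetric and let $\Gamma=\Pi B\Pi\in\mathcal{S}(G)$ be arbitrary. I would expand
$$\left\|A-\Pi B\Pi\right\|^2=\left\|A-\Pi A\Pi\right\|^2+\left\|\Pi A\Pi-\Pi B\Pi\right\|^2 ,$$
the cross term $\langle A-\Pi A\Pi,\ \Pi(B-A)\Pi\rangle=tr\big((A-\Pi A\Pi)\,\Pi(B-A)\Pi\big)$ vanishing by cyclicity of the trace together with $\Pi^2=\Pi$ and $\Pi^\top=\Pi$. Hence the infimum is attained exactly at $\hat\Gamma=\Pi A\Pi$, which belongs to $\mathcal{S}(G)$ and is symmetric (because $\Pi$ is). Undoing the reduction, for a general $A$ the minimiser is
$$\hat\Gamma=\Pi\,\tfrac{A+A^\top}{2}\,\Pi=G(G^\top G)^-G^\top\Big(\tfrac{A+A^\top}{2}\Big)G(G^\top G)^-G^\top=G\hat\Psi G^\top,\qquad \hat\Psi=(G^\top G)^-G^\top\Big(\tfrac{A+A^\top}{2}\Big)G(G^\top G)^- ,$$
where, if one wants $\hat\Psi\in\mathcal{S}_{|m|}$, one may replace $\hat\Psi$ by $\tfrac12(\hat\Psi+\hat\Psi^\top)$ at no cost since $G\hat\Psi^\top G^\top=\hat\Gamma^\top=\hat\Gamma$. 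This yields all three forms in the statement.

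The only genuinely non-routine point is the middle one: checking that $G(G^\top G)^-G^\top$ does not depend on the choice of reflexive generalized inverse and is a symmetric idempotent fixing the column space of $G$. Once that projector is in hand, the symmetric/antisymmetric splitting and the Pythagorean identity are each a one-line computation.
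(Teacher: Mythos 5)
Your proof is correct. Note that this paper does not actually prove Proposition \ref{proproj}; it only recalls the statement from the cited reference, so there is no in-paper argument to compare against. Your route --- splitting off the antisymmetric part of $A$, verifying that $\Pi=G\left(G^\top G\right)^-G^\top$ is the orthogonal projector onto the column space of $G$ (independent of the chosen generalized inverse, which is exactly the content of Remark \ref{rq:ouf}), identifying $\mathcal{S}\left(G\right)$ with $\left\{\Pi B\Pi : B\in\mathcal{S}_p\right\}$, and concluding by a Pythagorean identity --- is the standard and complete argument, and all the delicate points (symmetry of $\Pi$ via the dimension count, symmetrizing $\hat{\Psi}$ when $\left(G^\top G\right)^-$ is not symmetric) are handled correctly.
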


\begin{rem}
\label{rq:ouf}
Thanks to the properties of the reflexive generalized  inverse given in \cite{ALGRAO}, the projection of a non-negative definite matrix $A\in \mathcal{S}_p$ on $\mathcal{S}\left( G\right) $ will be also a non-negative definite matrix. Moreover, the matrix $\Pi$ does not depend on the choice of the generalized inverse.
\end{rem}

Thanks to this result, the projection of $\Sigma$ on $\mathcal{S}\left(G_m\right)$ can be characterized as
 \begin{equation}
 \Sigma_m= \Pi_m \Sigma \Pi_m
 \end{equation}
 
 and the same for $S$ (that is, our candidate for estimating $\Sigma$)
\begin{equation}
\hat{\Sigma}_m= \Pi_m S \Pi_m
\end{equation}

where $\Pi_m= G_m \left( G_m^\top G_m\right)  ^- G_m^\top$.

Hence, the estimator $\hat{\Sigma}_m$ is a covariance matrix. Now, our aim is to choose the best subset $m$ among a collection of candidates. 

\section{Model selection with the U.R.E. method}
\label{sec:ure}
Let $\mathcal{M}$ be a finite collection of models $m$. In this section, we focus on picking the best model among this collection by following the U.R.E. method. Since the law of $\left\Vert \Sigma - \hat{\Sigma}_m \right\Vert$ is unknown, we thus  aim at  finding an estimator of its expectation.

We consider that the best subset $m$ is  $m_0$ defined by
$$m_0 \in \underset{m\in \mathcal{M}}{argmin} \E \left[\left\Vert \Sigma - \hat{\Sigma}_m \right\Vert^2\right]$$ 
Then the oracle is defined as the best estimate knowing all the information, namely $\hat{\Sigma}_{m_0}$.

Set $R(m)=\E \left[\left\Vert \Sigma - \hat{\Sigma}_m \right\Vert^2\right]$.
First, we compute this quantity.
\begin{prop}
\label{eq:risk}
\begin{equation}
\mathbb{E}\left[   \left\Vert \Sigma -\hat{\Sigma}_m \right\Vert^ 2\right]  = \left\Vert \Sigma - \Pi_m \Sigma \Pi_m\right\Vert^2 + \frac{tr\left(  \left( \Pi_m \otimes \Pi_m\right)   \Phi\right)}{n}
\end{equation}
where $\Phi=Var\left( vec\left( xx^\top\right)  \right)  $. 
\end{prop}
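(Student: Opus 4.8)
The plan is to expand the risk by inserting the projection $\Sigma_m = \Pi_m \Sigma \Pi_m$ and using a bias–variance decomposition. First I would write
$$\Sigma - \hat{\Sigma}_m = (\Sigma - \Pi_m \Sigma \Pi_m) + (\Pi_m \Sigma \Pi_m - \Pi_m S \Pi_m) = (\Sigma - \Sigma_m) + \Pi_m(\Sigma - S)\Pi_m,$$
and then take the squared Frobenius norm and its expectation. The cross term is $2\,\E\big[\langle \Sigma - \Sigma_m,\ \Pi_m(\Sigma-S)\Pi_m\rangle\big]$; since $\E[S] = \Sigma$ the inner expectation factor $\Pi_m(\Sigma - \E[S])\Pi_m$ vanishes, so the cross term is $0$. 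This leaves the deterministic bias term $\|\Sigma - \Pi_m \Sigma \Pi_m\|^2$ — which is already in the desired form — plus the variance term $\E\big[\|\Pi_m(\Sigma-S)\Pi_m\|^2\big]$, which must be shown to equal $\tfrac{1}{n}\,tr\big((\Pi_m\otimes\Pi_m)\Phi\big)$.

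For the variance term I would vectorize. Using \eqref{rap:eq1} with $A = C^\top = \Pi_m$ (recall $\Pi_m$ is symmetric by Remark~\ref{rq:ouf}/Proposition~\ref{proproj}), we get $vec\big(\Pi_m(\Sigma-S)\Pi_m\big) = (\Pi_m\otimes\Pi_m)\,vec(\Sigma-S)$, and by \eqref{rap:eq2} the Frobenius norm equals the $\ell_2$ norm of this vector. Hence
$$\E\big[\|\Pi_m(\Sigma-S)\Pi_m\|^2\big] = \E\Big[ vec(\Sigma-S)^\top (\Pi_m\otimes\Pi_m)^\top(\Pi_m\otimes\Pi_m)\, vec(\Sigma-S)\Big].$$
Using \eqref{rap:eq3} and \eqref{rap:eq4} together with idempotence and symmetry of $\Pi_m$, the product $(\Pi_m\otimes\Pi_m)^\top(\Pi_m\otimes\Pi_m)$ collapses to $\Pi_m\otimes\Pi_m$. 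Writing the quadratic form as a trace and using linearity of expectation with the cyclic property, this becomes $tr\big((\Pi_m\otimes\Pi_m)\,\E[vec(\Sigma-S)\,vec(\Sigma-S)^\top]\big) = tr\big((\Pi_m\otimes\Pi_m)\,Var(vec(S))\big)$, the last equality because $\E[S]=\Sigma$.

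Finally I must relate $Var(vec(S))$ to $\Phi = Var(vec(xx^\top))$. Since $S = \frac1n\sum_{i=1}^n x_i x_i^\top$ with the $x_i$ i.i.d. copies of $x$, we have $vec(S) = \frac1n\sum_i vec(x_ix_i^\top)$, a sum of $n$ i.i.d. terms, so $Var(vec(S)) = \frac1n Var(vec(xx^\top)) = \frac1n\Phi$. Substituting gives the variance term $\tfrac1n tr\big((\Pi_m\otimes\Pi_m)\Phi\big)$, completing the identity. The only slightly delicate point is bookkeeping the transposes and idempotence when simplifying the Kronecker products — in particular using that $\Pi_m^\top = \Pi_m$ and $\Pi_m^2 = \Pi_m$, which follow from Proposition~\ref{proproj} and Remark~\ref{rq:ouf} — but this is routine algebra rather than a genuine obstacle; the existence of the fourth moments of $X$ is what guarantees $\Phi$ is finite so that every expectation above is well defined.
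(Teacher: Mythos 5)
Your proof is correct and follows essentially the same route as the paper: a bias--variance split of $\Sigma-\hat{\Sigma}_m$ about $\Sigma_m=\Pi_m\Sigma\Pi_m$, vectorization of the fluctuation term via $vec(\Pi_m(\Sigma-S)\Pi_m)=(\Pi_m\otimes\Pi_m)\,vec(\Sigma-S)$, and the i.i.d.\ scaling $Var(vec(S))=\Phi/n$. The only cosmetic difference is that the paper kills the cross term by exact orthogonality of $\Sigma-\Sigma_m$ to $\mathcal{S}(G_m)$ (a pointwise Pythagoras identity), whereas you kill it only in expectation using $\E[S]=\Sigma$; both are valid.
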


Here we can note the similarity with the usual risk for standard estimation models.
For instance, assume that we observe a Gaussian model with observations a vector $Y\in \mathbb{R}^n$ such as 
$$Y=\theta + \epsilon\xi \quad  \xi \sim \mathcal{N}\left( 0,I_n\right) $$
where $\epsilon \in \R$ and $\theta \in \mathbb{R}^n$ is the unknown quantity to estimate, using the projection $\hat{\theta}_m$ of the vector $Y$ onto some subspace $S_m$. If the subspace dimension is denoted by $D_m$, the risk of a such estimator is given by
$$\E\left[ \left\Vert \theta- \hat{\theta}_m \right\Vert^ 2 \right]   =\left\Vert \theta_m- \theta \right\Vert^2 + \epsilon^2D_m.$$
We thus recognize the same kind of decomposition with a bias term and with $\frac{tr\left(  \left( \Pi_m \otimes \Pi_m\right)   \Phi\right)}{n}$ playing the role of the variance term $D_m/n$ with $\epsilon = 1/\sqrt{n}$.  Hence it is natural to extend the Unbiased Risk Estimation procedure of previous Gaussian model to the matrix model obtained by the vectorization of Model~\eqref{eq:modmatr}.

Now, we present an estimator of the risk.
We assume $n \geqslant 3$, and we set :

$$\hat{\gamma}_m^2= \frac{1}{n-1}\sum_{i=1}^n \left\Vert\Pi_m x_ix_i^\top \Pi_m - \hat{\Sigma}_m \right\Vert^2$$

\begin{prop}
\label{ure}
$\left\Vert S- \hat{\Sigma}_m\right\Vert^ 2 + 2 \frac{\hat{\gamma}_m^2}{n}+ C$ is an unbiased  estimator of the risk,  where $C$ does not depend on $m$.
More precisely : 
$$\mathbb{E}\left[    \left\Vert S- \hat{\Sigma}_m\right\Vert^ 2 + 2 \frac{\hat{\gamma}_m^2}{n}\right]  =\mathbb{E}\left[   \left\Vert \Sigma -\hat{\Sigma}_m \right\Vert^ 2\right]   + \frac{tr\left( \Phi\right)  }{n}$$
\end{prop}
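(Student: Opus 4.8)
The plan is to expand $\left\Vert S - \hat{\Sigma}_m \right\Vert^2$, take expectations, and show that the extra $2\hat\gamma_m^2/n$ term exactly corrects the discrepancy between $\E\left[\left\Vert S - \hat{\Sigma}_m\right\Vert^2\right]$ and the true risk $R(m) = \E\left[\left\Vert \Sigma - \hat{\Sigma}_m\right\Vert^2\right]$, up to the $m$-independent constant $\operatorname{tr}(\Phi)/n$. First I would write $S - \hat{\Sigma}_m = (\Sigma - \hat{\Sigma}_m) + (S - \Sigma)$ and, since $\hat{\Sigma}_m = \Pi_m S \Pi_m$ is the orthogonal projection of $S$ onto $\mathcal{S}(G_m)$, use the Pythagorean identity to decompose $\left\Vert S - \hat{\Sigma}_m\right\Vert^2$. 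More directly, writing $S = \Sigma + \bar U$ with $\bar U = \frac1n\sum_i U_i$, one gets $\hat{\Sigma}_m - \Sigma = \Pi_m \Sigma \Pi_m - \Sigma + \Pi_m \bar U \Pi_m$ and $S - \hat{\Sigma}_m = (\Sigma - \Pi_m\Sigma\Pi_m) + (\bar U - \Pi_m \bar U \Pi_m)$. Taking the squared norm and using that the ``bias'' part $\Sigma - \Pi_m\Sigma\Pi_m$ is orthogonal to $\mathcal{S}(G_m)$ while $\Pi_m\bar U\Pi_m$ lies in it, the cross terms involving projected quantities drop, leaving
$$\left\Vert S - \hat{\Sigma}_m\right\Vert^2 = \left\Vert\Sigma - \Pi_m\Sigma\Pi_m\right\Vert^2 + \left\Vert\bar U - \Pi_m\bar U\Pi_m\right\Vert^2 + 2\left\langle \Sigma - \Pi_m\Sigma\Pi_m, \bar U\right\rangle.$$
The last term has zero expectation since $\E[\bar U] = 0$.

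Next I would compute $\E\left[\left\Vert\bar U - \Pi_m\bar U\Pi_m\right\Vert^2\right] = \E\left[\left\Vert\bar U\right\Vert^2\right] - \E\left[\left\Vert\Pi_m\bar U\Pi_m\right\Vert^2\right]$, again by orthogonality. Passing to vectorized form via \eqref{rap:eq1} and \eqref{rap:eq2}, $\operatorname{vec}(\Pi_m\bar U\Pi_m) = (\Pi_m\otimes\Pi_m)\operatorname{vec}(\bar U)$, and since $\operatorname{Var}(\operatorname{vec}(\bar U)) = \Phi/n$ (the $U_i$ being i.i.d. with variance $\Phi$), one finds $\E\left[\left\Vert\Pi_m\bar U\Pi_m\right\Vert^2\right] = \operatorname{tr}\bigl((\Pi_m\otimes\Pi_m)\Phi(\Pi_m\otimes\Pi_m)^\top\bigr)/n$, which using \eqref{rap:eq3}, \eqref{rap:eq4} and idempotence/symmetry of $\Pi_m$ equals $\operatorname{tr}\bigl((\Pi_m\otimes\Pi_m)\Phi\bigr)/n$ — exactly the variance term appearing in Proposition~\ref{eq:risk}. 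Similarly $\E\left[\left\Vert\bar U\right\Vert^2\right] = \operatorname{tr}(\Phi)/n$. Combining with Proposition~\ref{eq:risk}, we get
$$\E\left[\left\Vert S - \hat{\Sigma}_m\right\Vert^2\right] = R(m) + \frac{\operatorname{tr}(\Phi)}{n} - \frac{2\operatorname{tr}\bigl((\Pi_m\otimes\Pi_m)\Phi\bigr)}{n},$$
so it remains to show that $\E\left[2\hat\gamma_m^2/n\right] = 2\operatorname{tr}\bigl((\Pi_m\otimes\Pi_m)\Phi\bigr)/n$, i.e. that $\hat\gamma_m^2$ is an unbiased estimator of $n$ times the variance term, namely $\operatorname{tr}\bigl((\Pi_m\otimes\Pi_m)\Phi\bigr)$.

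This last point is the main obstacle and the technical heart of the argument. The quantity $\hat\gamma_m^2 = \frac{1}{n-1}\sum_i \left\Vert \Pi_m x_i x_i^\top \Pi_m - \hat{\Sigma}_m\right\Vert^2$ is, by construction, the sample variance of the i.i.d. random matrices $Z_i := \Pi_m x_i x_i^\top \Pi_m$, whose empirical mean is $\hat{\Sigma}_m = \Pi_m S \Pi_m$; the standard unbiasedness of the sample variance gives $\E\left[\hat\gamma_m^2\right] = \E\left[\left\Vert Z_1 - \E[Z_1]\right\Vert^2\right] = \operatorname{tr}\bigl(\operatorname{Var}(\operatorname{vec}(Z_1))\bigr)$, the trace of the covariance operator of $Z_1 = \Pi_m x_1x_1^\top\Pi_m$. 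Finally, $\operatorname{vec}(Z_1) = (\Pi_m\otimes\Pi_m)\operatorname{vec}(x_1x_1^\top)$ by \eqref{rap:eq1}, so $\operatorname{Var}(\operatorname{vec}(Z_1)) = (\Pi_m\otimes\Pi_m)\Phi(\Pi_m\otimes\Pi_m)^\top$, and taking the trace together with $(\Pi_m\otimes\Pi_m)^\top(\Pi_m\otimes\Pi_m) = \Pi_m\otimes\Pi_m$ (from \eqref{rap:eq3}, \eqref{rap:eq4} and $\Pi_m^\top\Pi_m = \Pi_m$) and cyclicity of the trace yields $\E\left[\hat\gamma_m^2\right] = \operatorname{tr}\bigl((\Pi_m\otimes\Pi_m)\Phi\bigr)$. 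I would need to double-check the bookkeeping of the $\frac{1}{n-1}$ normalization and the cancellation of cross terms in expanding $\sum_i\left\Vert Z_i - \bar Z\right\Vert^2 = \sum_i\left\Vert Z_i\right\Vert^2 - n\left\Vert\bar Z\right\Vert^2$, but this is the classical computation; no assumption beyond finite fourth moments of $X$ (guaranteeing $\Phi$ is well-defined and finite) is needed. Assembling the two displayed identities completes the proof, with $C = -\operatorname{tr}(\Phi)/n$ independent of $m$.
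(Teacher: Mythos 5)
Your proposal is correct and follows essentially the same route as the paper: the same two ingredients appear, namely the identity $\mathbb{E}\left[\left\Vert S-\hat{\Sigma}_m\right\Vert^2\right]=\left\Vert\Sigma-\Sigma_m\right\Vert^2+\frac{tr\left(\Phi\right)}{n}-\frac{tr\left(\left(\Pi_m\otimes\Pi_m\right)\Phi\right)}{n}$ and the fact that $\hat{\gamma}_m^2$, being the sample variance of the i.i.d.\ matrices $\Pi_m x_ix_i^\top\Pi_m$, is unbiased for $tr\left(\left(\Pi_m\otimes\Pi_m\right)\Phi\right)$ (the paper proves this last point by the explicit pairwise computation you defer to as ``classical''). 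The only cosmetic difference is that you organize the expansion around $S=\Sigma+\bar{U}$ rather than expanding $\left\Vert S-\hat{\Sigma}_m\right\Vert^2$ about $\Sigma$ directly; both yield the same cancellations.
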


Note that  the constant $\frac{tr\left( \Phi\right)  }{n}$ is unknown but does not depend on $m$. So in the URE procedure, minimizing $\left\Vert S- \hat{\Sigma}_m\right\Vert^ 2 + 2 \frac{\hat{\gamma}_m^2}{n}$ with respect to $m$ is equivalent to minimizing $\left\Vert S- \hat{\Sigma}_m\right\Vert^ 2 + 2 \frac{\hat{\gamma}_m^2}{n}+ C$ which is unbiased. 

Then we can define the estimator $\hat{\Sigma}$ of $\Sigma$ by

$$\hat{\Sigma}=\Pi_{\hat{m}} S \Pi_{\hat{m}}= \Hat{\Sigma}_{\hat{m}}$$
$$\text{with }\hat{m}\in \underset{ m\in \mathcal{M}}{argmin} \left(\left\Vert S- \hat{\Sigma}_m\right\Vert^ 2 + 2 \frac{\hat{\gamma}_m^2}{n}\right) $$

The next theorem establishes an oracle inequality for this  estimator. %
\begin{theo}
\label{oracle}
For all $A>0$, we have :
$$\mathbb{E}\left[   \left\Vert\tilde{\Sigma}- \Sigma \right\Vert^2\right]  \leqslant \left( 1+A^{-1}\right)  \inf _{\substack{m\in \mathcal{M}}}\mathbb{E}\left[   \left\Vert \Sigma -\hat{\Sigma}_{m} \right\Vert^ 2\right]   + \frac{tr\left( \Phi\right)  }{n}\left( 4+A\right)  $$
\end{theo}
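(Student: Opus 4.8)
The plan is to exploit the defining property of $\hat m$ together with the unbiasedness identity of Proposition~\ref{ure}. Write $\mathrm{crit}(m)=\|S-\hat\Sigma_m\|^2+2\hat\gamma_m^2/n$. By definition of $\hat m$, for every fixed $m\in\mathcal{M}$ we have $\mathrm{crit}(\hat m)\leqslant \mathrm{crit}(m)$, hence
\[
\|S-\hat\Sigma_{\hat m}\|^2+2\frac{\hat\gamma_{\hat m}^2}{n}\;\leqslant\;\|S-\hat\Sigma_{m}\|^2+2\frac{\hat\gamma_{m}^2}{n}.
\]
The first step is to relate $\|S-\hat\Sigma_{\hat m}\|^2$ to the quantity we actually want to bound, $\|\tilde\Sigma-\Sigma\|^2=\|\hat\Sigma_{\hat m}-\Sigma\|^2$. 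Since $\hat\Sigma_m=\Pi_m S\Pi_m$ is the Frobenius projection of $S$ onto $\mathcal{S}(G_m)$ and $\Sigma_m=\Pi_m\Sigma\Pi_m$ is the projection of $\Sigma$, a Pythagorean-type expansion gives, for any $m$,
\[
\|S-\hat\Sigma_m\|^2=\|S-\Sigma\|^2-\|\hat\Sigma_m-\Sigma_m\|^2-2\langle S-\Sigma,\Sigma-\Sigma_m\rangle+\text{(cross terms)},
\]
which I would organize so that $\|S-\Sigma\|^2$ appears as an $m$-free term (it cancels when comparing $\hat m$ with $m$) and the remaining pieces are $\|\hat\Sigma_m-\Sigma\|^2$, a scalar product $\langle \hat\Sigma_m-\Sigma_m,\; \text{noise}\rangle$, and a bias term. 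Taking expectations and using Proposition~\ref{eq:risk} to identify $\mathbb{E}\|\hat\Sigma_m-\Sigma\|^2=R(m)$ plus the trace term, the comparison inequality will turn into
\[
\mathbb{E}\|\hat\Sigma_{\hat m}-\Sigma\|^2 \;\leqslant\; R(m)+\frac{\mathrm{tr}(\Phi)}{n} + 2\,\mathbb{E}\big[\langle \hat\Sigma_{\hat m}-\Sigma_{\hat m},\,S-\Sigma\rangle\big] - 2\,\mathbb{E}\Big[\frac{\hat\gamma_{\hat m}^2}{n}\Big]+2\,\mathbb{E}\Big[\frac{\hat\gamma_{m}^2}{n}\Big],
\]
up to a careful bookkeeping of the terms that are free of $m$.

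The heart of the argument is controlling the stochastic cross term $T(\hat m):=\langle \hat\Sigma_{\hat m}-\Sigma_{\hat m},\,S-\Sigma\rangle$. Since $\hat m$ is random this is not a centered variable, so I would bound it uniformly: $T(\hat m)\leqslant \sup_{m\in\mathcal{M}}|T(m)|$ is too crude, so instead I would use that $\hat\Sigma_m-\Sigma_m=\Pi_m(S-\Sigma)\Pi_m$, hence by Proposition~\ref{rappel} and the fact that $\Pi_m\otimes\Pi_m$ is an orthogonal projection, $T(m)=\|\Pi_m(S-\Sigma)\Pi_m\|^2+\langle \Pi_m(S-\Sigma)\Pi_m,(I-\Pi_m\otimes\Pi_m)vec(S-\Sigma)\rangle$; the point is that $T(m)\geqslant 0$ essentially reduces to $\|\hat\Sigma_m-\Sigma_m\|^2$ up to a genuinely centered remainder. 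Then I would invoke the elementary inequality $2\langle u,v\rangle\leqslant A^{-1}\|u\|^2+A\|v\|^2$ with $u,v$ chosen so that the $A^{-1}\|u\|^2$ piece is absorbed into $(1+A^{-1})R(m)$ on the right and the $A\|v\|^2$ piece, after taking expectation, is controlled by $A\,\mathrm{tr}(\Phi)/n$ — this is exactly where the factor $(1+A^{-1})$ in front of the infimum and the $(4+A)\mathrm{tr}(\Phi)/n$ remainder come from. The variance-estimator terms $\hat\gamma_m^2/n$ are handled using that $\hat\gamma_m^2$ concentrates: one shows $\mathbb{E}[\hat\gamma_m^2]$ equals (a multiple of) the variance part of the risk, so the $+2\hat\gamma_m^2/n$ in the criterion is precisely what compensates the $+2\|\hat\Sigma_m-\Sigma_m\|^2$-type overshoot, leaving a clean remainder.

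Finally, since the resulting bound holds for every fixed $m\in\mathcal{M}$, I take the infimum over $m$ on the right-hand side to obtain
\[
\mathbb{E}\|\tilde\Sigma-\Sigma\|^2\leqslant (1+A^{-1})\inf_{m\in\mathcal{M}}\mathbb{E}\|\Sigma-\hat\Sigma_m\|^2+\frac{\mathrm{tr}(\Phi)}{n}(4+A).
\]
The main obstacle I anticipate is the treatment of the cross term $T(\hat m)$: because $\hat m$ depends on all the data, one cannot simply take expectations inside, and the naive supremum over $\mathcal{M}$ would produce a $\log|\mathcal{M}|$ factor absent from the statement. The trick must be that the criterion $\mathrm{crit}(m)$ was designed so that the bad part of $T(m)$ is exactly the deterministic quantity already subtracted (this is the URE philosophy), so that what remains after the algebra is a single centered term whose second moment is bounded by $\mathrm{tr}(\Phi)/n$ uniformly in $m$ — making the Cauchy–Schwarz/Young step legitimate without any union bound. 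Getting the constants $4$ and $A$ to come out exactly will require being careful about which terms are folded into the $\|v\|^2$ bucket and using Proposition~\ref{ure} rather than re-deriving the variance identity.
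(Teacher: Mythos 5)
Your scaffolding (compare the criterion at $\hat{m}$ with a fixed $m$, invoke the unbiasedness of Proposition \ref{ure}, finish with Young's inequality and an infimum over $m$) matches the paper, but the central step --- controlling the data-dependent cross term --- is not actually carried out, and the route you sketch drops the term that causes all the trouble. Expanding around $\Sigma$, the cross term is
$$\left\langle \hat{\Sigma}_{\hat{m}}-S,\,S-\Sigma\right\rangle=\left\Vert\hat{\Sigma}_{\hat{m}}-\Sigma_{\hat{m}}\right\Vert^2+\left\langle \Sigma_{\hat{m}}-\Sigma,\,S-\Sigma\right\rangle-\left\Vert S-\Sigma\right\Vert^2 .$$
Your working inequality retains only the first piece, $\langle \hat{\Sigma}_{\hat{m}}-\Sigma_{\hat{m}}, S-\Sigma\rangle=\Vert\Pi_{\hat{m}}(S-\Sigma)\Pi_{\hat{m}}\Vert^2$, which is indeed harmless (it is bounded pointwise by $\Vert S-\Sigma\Vert^2$, so no union bound is needed there). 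The piece $\langle \Sigma_{\hat{m}}-\Sigma, S-\Sigma\rangle$ appears in your first expansion but then vanishes from the bookkeeping; it is centered for each fixed $m$ but not once $m$ is replaced by the data-dependent $\hat{m}$, it is not ``free of $m$'', and it is not compensated by $2\hat{\gamma}_m^2/n$ (whose expectation matches $2\Vert\hat{\Sigma}_m-\Sigma_m\Vert^2$ for fixed $m$ only, which is again the same difficulty). Your closing paragraph conjectures that ``the algebra works out'' but gives no mechanism; and if you attack this term with $2ab\leqslant A^{-1}a^2+Ab^2$ directly, the $A^{-1}\Vert\Sigma-\hat{\Sigma}_{\hat{m}}\Vert^2$ piece must be absorbed into the left-hand side, producing a constant $(1-A^{-1})^{-1}$ valid only for $A>1$, not the stated $(1+A^{-1})$.

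The paper's resolution is different and does not decompose the cross term at all: it bounds
$$\mathbb{E}\left[\left\langle \hat{\Sigma}-S,\,S-\Sigma\right\rangle\right]\leqslant \sqrt{\mathbb{E}\left[\left\Vert\hat{\Sigma}-S\right\Vert^2\right]}\sqrt{\mathbb{E}\left[\left\Vert S-\Sigma\right\Vert^2\right]}$$
by Cauchy--Schwarz, and then controls the first factor by reusing the selection inequality: $\mathbb{E}[\Vert\hat{\Sigma}-S\Vert^2]\leqslant\mathbb{E}[\Vert S-\hat{\Sigma}_{\hat{m}}\Vert^2+2\hat{\gamma}_{\hat{m}}^2/n]\leqslant\mathbb{E}[\Vert S-\hat{\Sigma}_{m_0}\Vert^2+2\hat{\gamma}_{m_0}^2/n]=R(m_0)+tr(\Phi)/n$ by Proposition \ref{ure}. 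This yields the product $2\sqrt{R(m_0)}\sqrt{tr(\Phi)/n}$, and it is Young's inequality applied to \emph{that} product which produces the constants $(1+A^{-1})$ and $(4+A)$. This double use of the criterion inequality --- once for the main term and once inside the Cauchy--Schwarz bound --- is the missing idea, and it is also the real reason no $\log\vert\mathcal{M}\vert$ factor appears.
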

Hence we have obtained a model selection procedure which enables to recover
the best covariance model among a given collection. This method works
without strong assumptions on the process, in particular stationarity is not
assumed, but at the expend of necessary i.i.d observations of the process at
the same points. \\ We point out that this study requires a large number of replications $n$ with respect to the number of observation points $p$. Actually our method is not designed  to  tackle the problem of  covariance estimation in the high dimensional case $p>>n$. This topic  has received a growing attention over the past years and we refer to \cite{Levina08} and references therein for a survey. \vskip .1in

The proof of these results are using the vectorization of the matrices involved here. That is why we must deal with the matrix $\Phi=var\left( vec\left( x x^\top \right)\right)$. It is postponed to the appendix.



\section{Numerical examples}
\label{sec:num}
In this section we illustrate the behaviour of the covariance estimator $\hat{\Sigma}$ with programs implemented using SCILAB. We aim at knowing if our procedure leads to choose the best model, that is the model minimizing the risk.\\

Recall that $n$ is the number of copies of the process and $p$ is the number of points where we observe these copies.
Here, we consider the case where $T=\left[   0;1\right]  $ and $\Lambda$ is a subset of $\mathbb{N}$. For sake of simplicity, we identify  $m$ and the set ${1,\dots, m}$. Moreover, the points $\left( t_j\right)_{1 \leqslant j \leqslant p}$ are equi-spaced in $\left[   0;1\right]  $.\\

For a given process $X$, we must start by the choice of the functions of its expansion. Their knowledge is needed for the matrix $G_m$. Indeed, $\left( G_m\right)  _{j\lambda}= g_\lambda \left( t_j\right)$.\\

The method is the following: First, we simulate a sample for $p$ and $n$ given. Second, for $m$ between $1$ to some integer $M$, we compute the unbiased risk estimator related to the model $m$. Finally, we pick out a $\hat{m}$ minimizing this estimator and we compute $\hat{\Sigma}$.

For each example, we plot the curve of the risk function and give its minimum $m_0$. We plot also the curve of the function of the risk estimator and give its minimum $\hat{m}$. Finally we compare the true covariance and the estimator.

\subsubsection*{Example 1}
Here we work with the numerical examples of \cite{MR2684389}.
We choose the Fourier basis functions: 
\begin{equation*}
g_\lambda( t)  =\left \{ \begin{array}{l} \frac{1}{\sqrt{p}} \text{ si } \lambda= 1 \\
                                \sqrt{2}  \frac{1}{\sqrt{p}} \cos( 2\pi \frac{\lambda}{2} t ) \text{ si } \lambda \text{ est pair} \\
                                 \sqrt{2}  \frac{1}{\sqrt{p}} \sin( 2\pi \frac{\lambda -1}{2} t ) \text{ si } \lambda \text{ est impair}  
\end{array} \right.
\end{equation*}

And we study the following process :
$$X(t)=\sum_{\lambda=1}^{m^\star} a_\lambda g_\lambda(t)$$
where $a_\lambda$ are independent Gaussian variables with mean zero and variance $V(a_\lambda)$.
Let $D(V)$ the diagonal matrix in $m^\star \times m^\star$ such as $D(V)_{\lambda \lambda}= V(a_\lambda)$. Then we have
$$\Sigma=G_{m^\star} D(V) G_{m^\star}^\top$$

Here are the results for $V(a_\lambda)=1 \quad \forall \quad \lambda$. We choose $m^\star=35=p$, $n=50$, $M=31$. Here it can be shown that the minimum of the risk is achieved at $\frac{n}{2}-1$, so in this setting we have $m_0=24$. Here the minimum of the estimator is the same : $\hat{m}=24$.

\begin{tabular}{c c}
\includegraphics[width=7 cm]{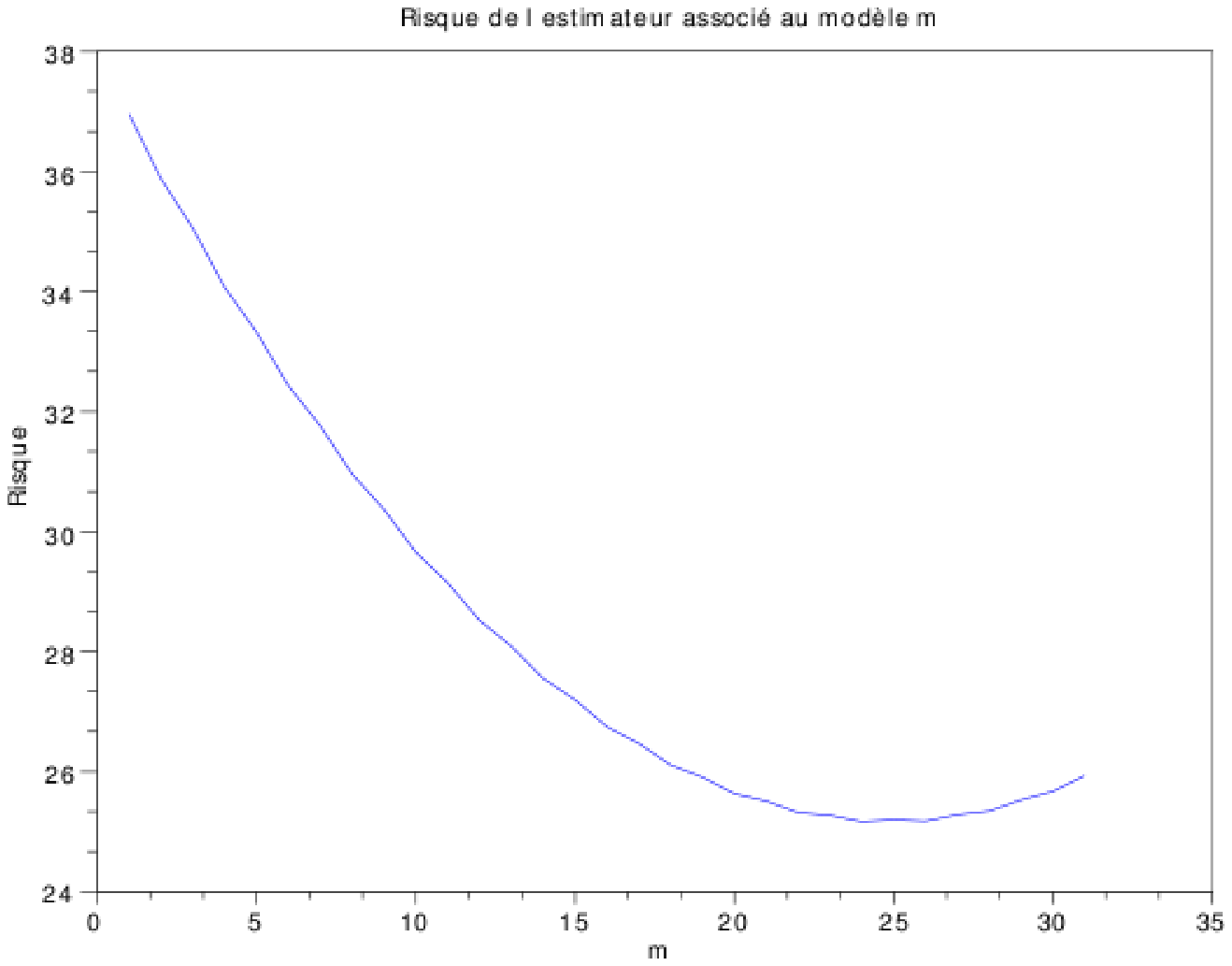} &
\includegraphics[width=7 cm]{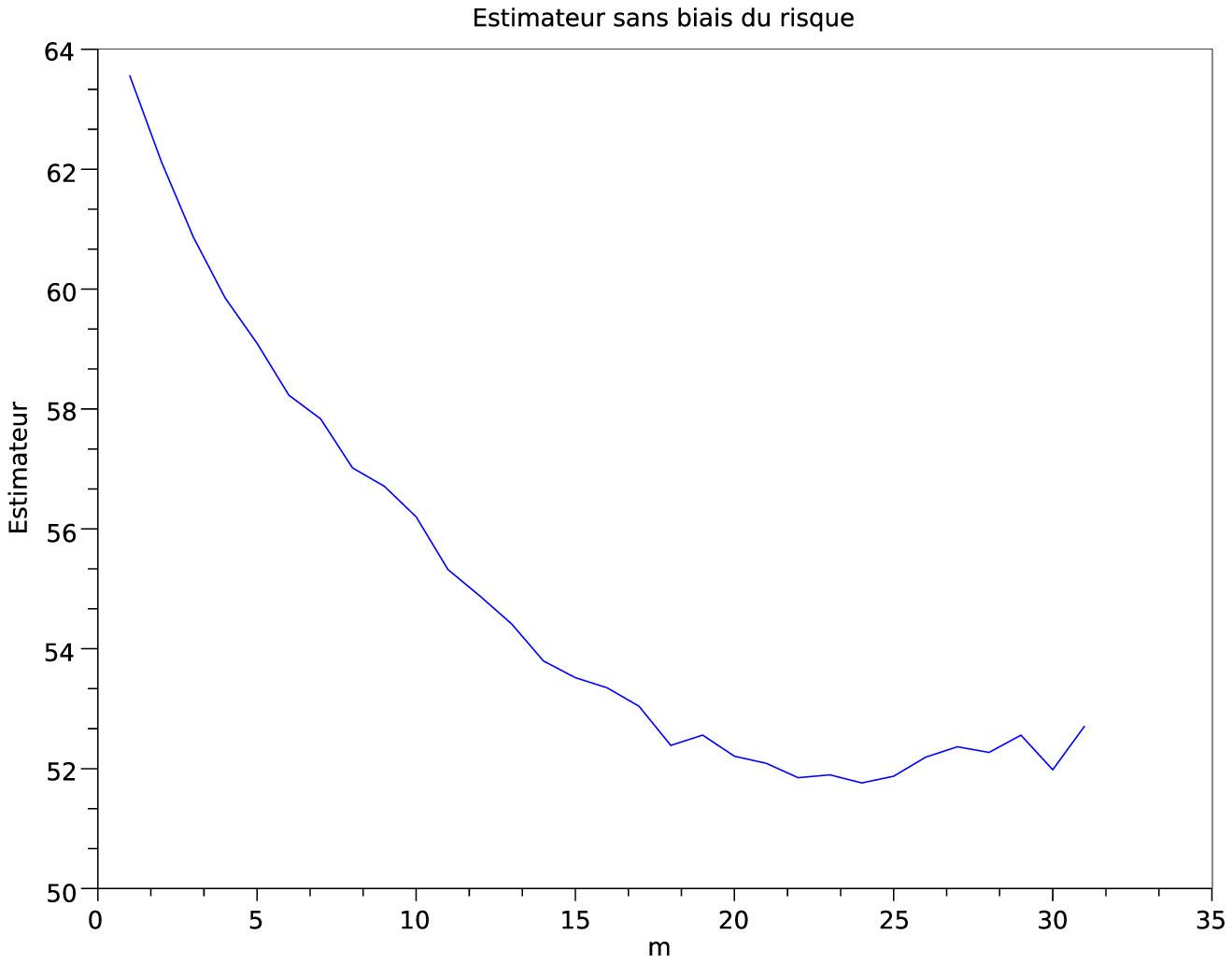} \\
\includegraphics[width=7 cm]{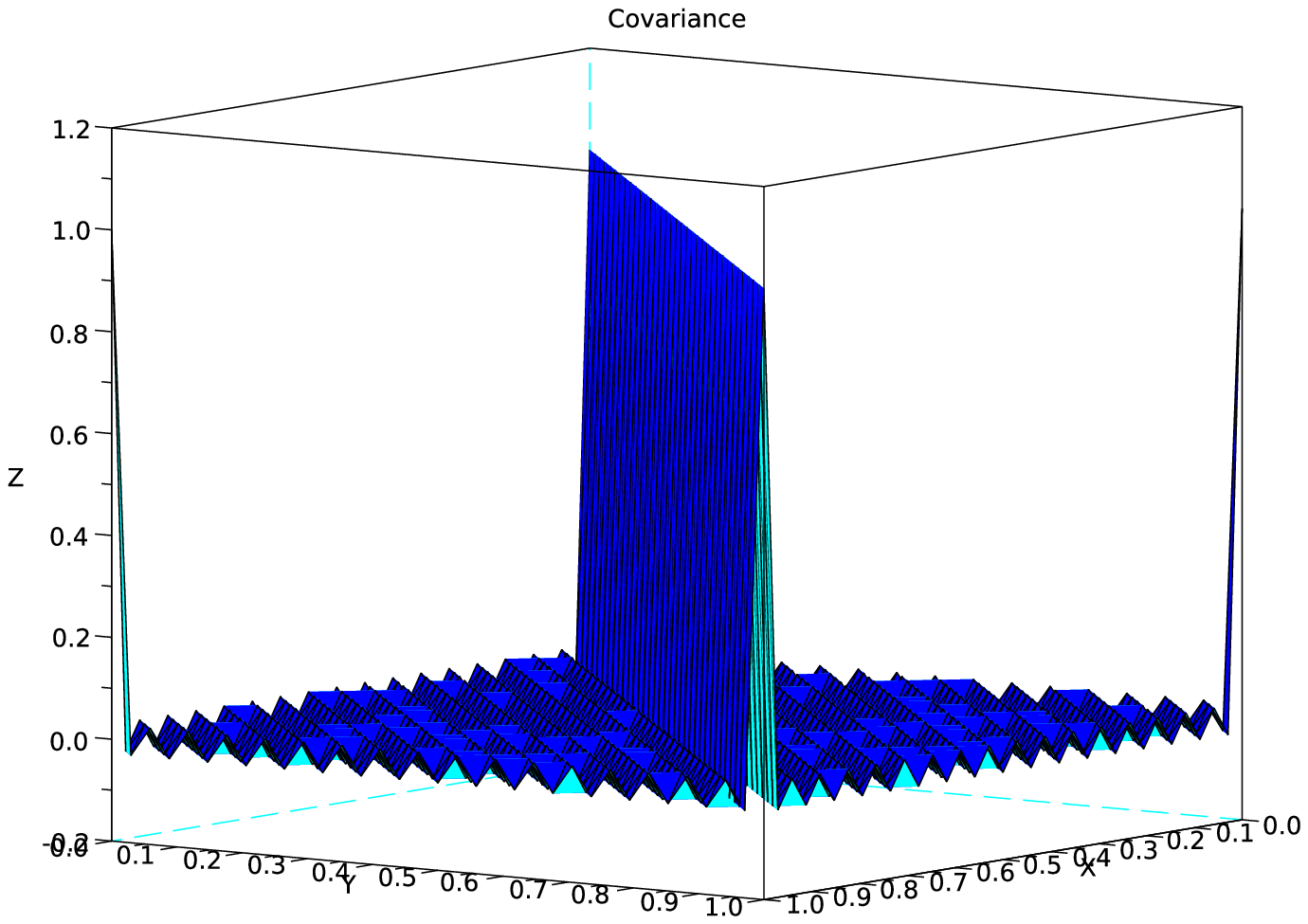}&
\includegraphics[width=7 cm]{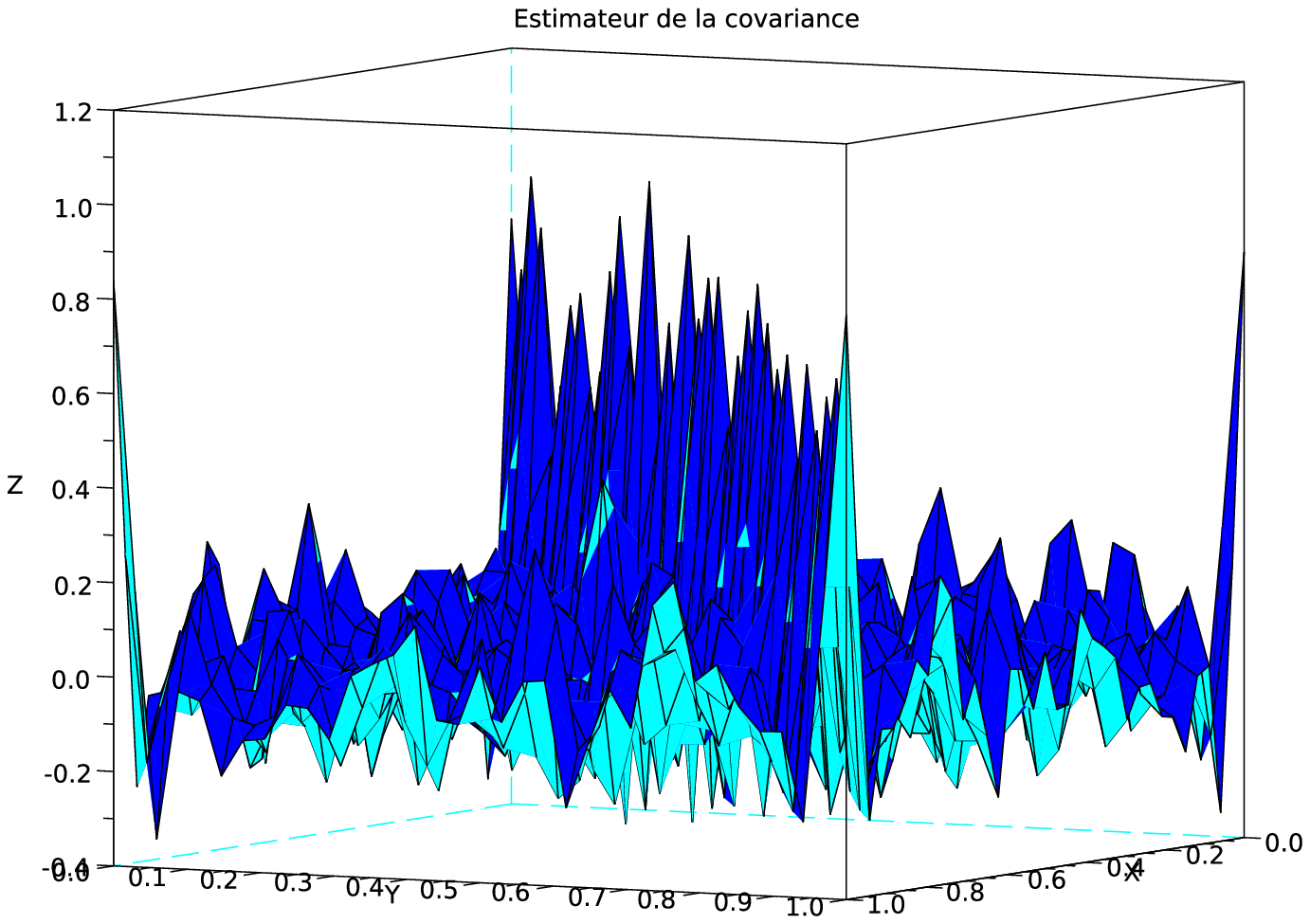}
\end{tabular}



Here are the results for $V(a_\lambda)=0.0475+0.95^{\lambda}\quad \forall \quad \lambda$, and $m^\star=35=p$, $n=60$, $M=34$. Here the figures show that $m_0=\hat{m}=18$.


\subsubsection*{Example 2} 
Now we test our estimator with the process studied in \cite{CAI}.

We consider the functions
$$g_\lambda(t)=\cos(\lambda \pi t)$$
And the process $X$ studied is :
$$X(t)=\sum_{\lambda=1}^{m^\star} a_\lambda \zeta_\lambda g_\lambda(t)$$
where $a_\lambda$ are i.i.d. random variables following the uniform law on $\left[   -\sqrt{3};\sqrt{3}\right]  $ and $\zeta_\lambda=\frac{(-1)^{\lambda + 1}}{\lambda^2}$. 
If $D$ is the diagonal matrix with entries $D_{\lambda \lambda}= \frac{1}{\lambda^{4}}$, as before we have that
$$\Sigma=G_{m^\star} D G_{m^\star}^\top$$
Here we choose $m^ \star =50$, $n=1000$, $p=40$ and $M=20$. We found $m_0=4=\hat{m}$.

\begin{tabular}{c c}
\includegraphics[width=7 cm]{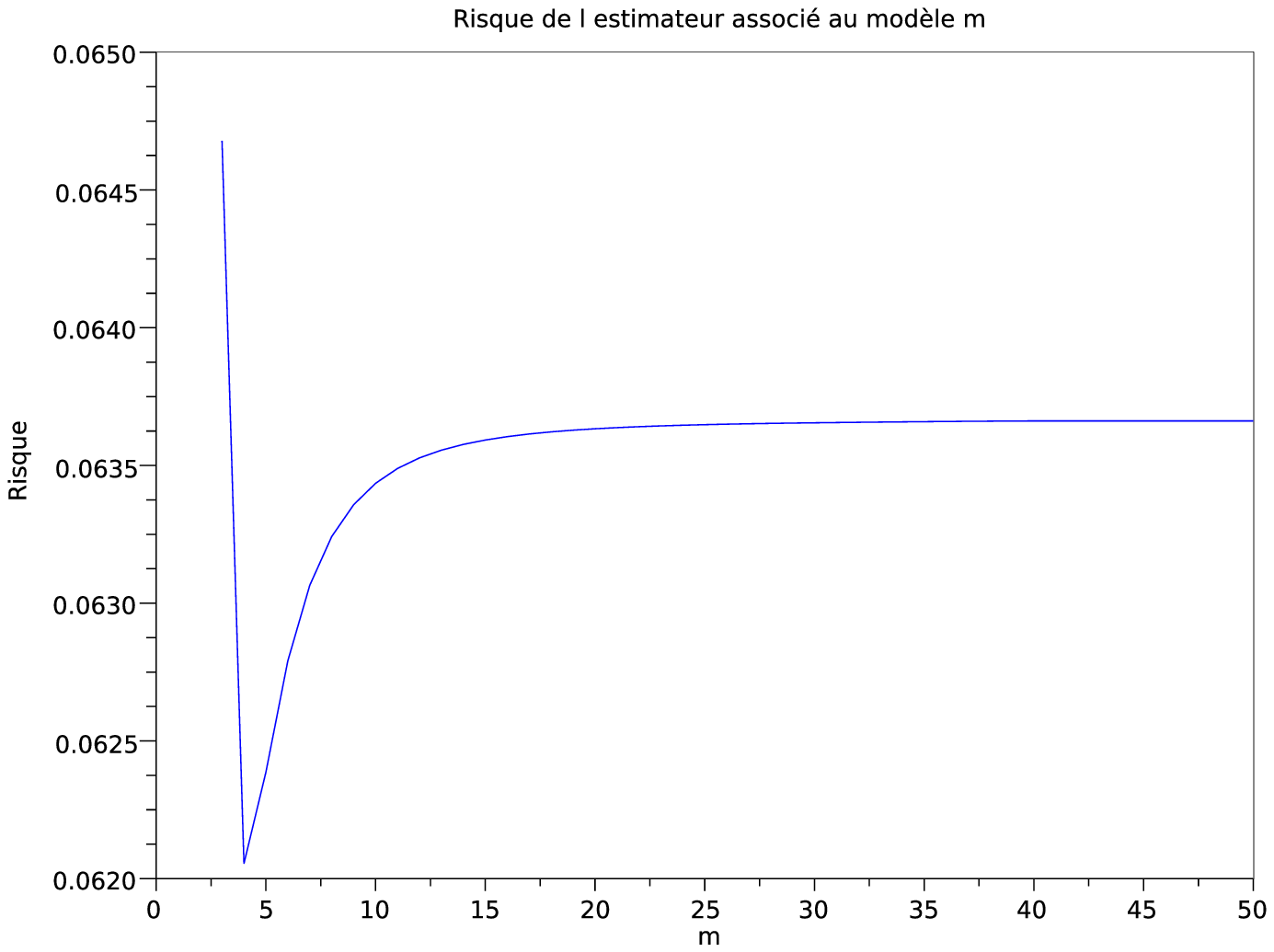} &
\includegraphics[width=7 cm]{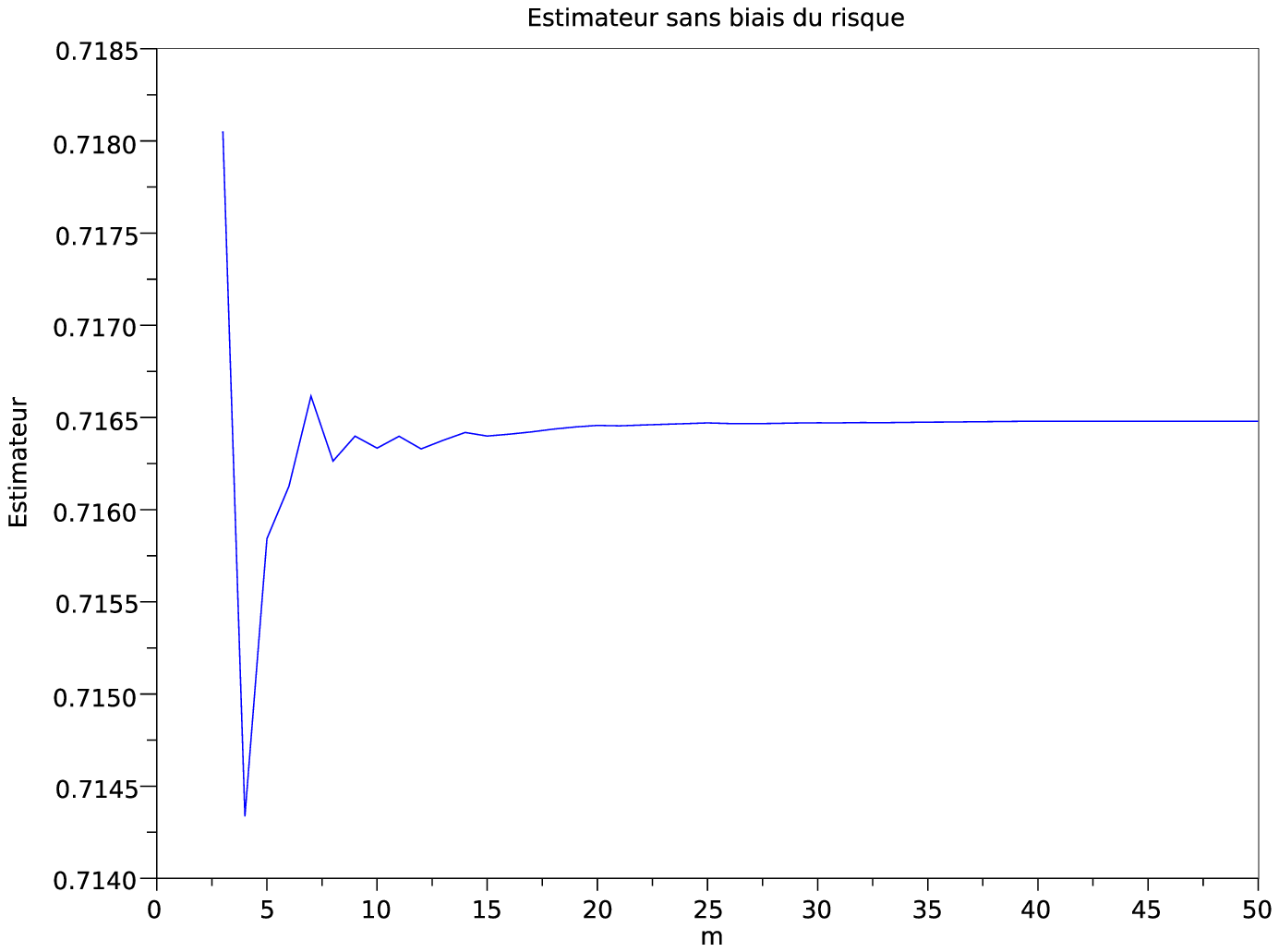} \\
\includegraphics[width=7 cm]{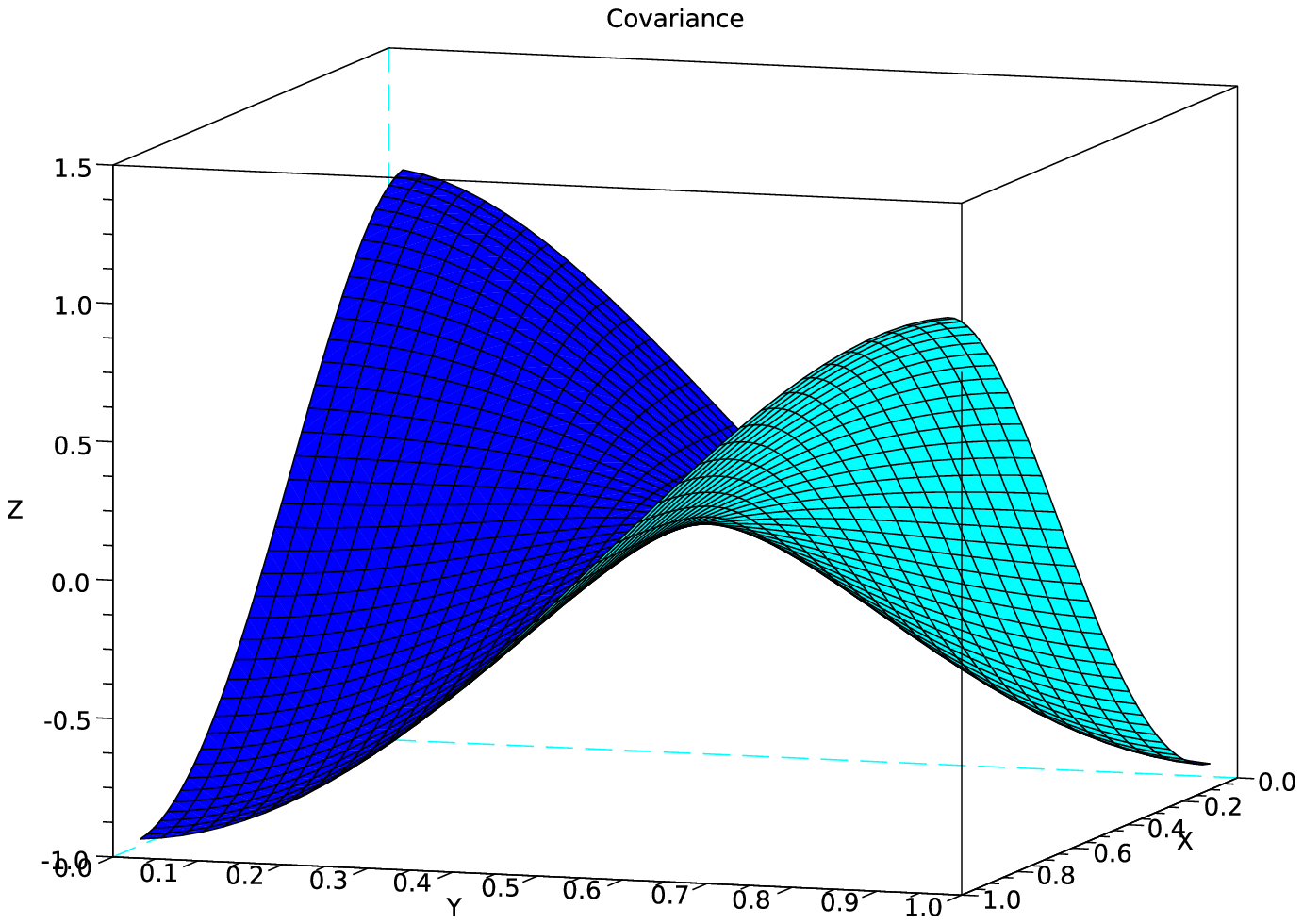}&
\includegraphics[width=7 cm]{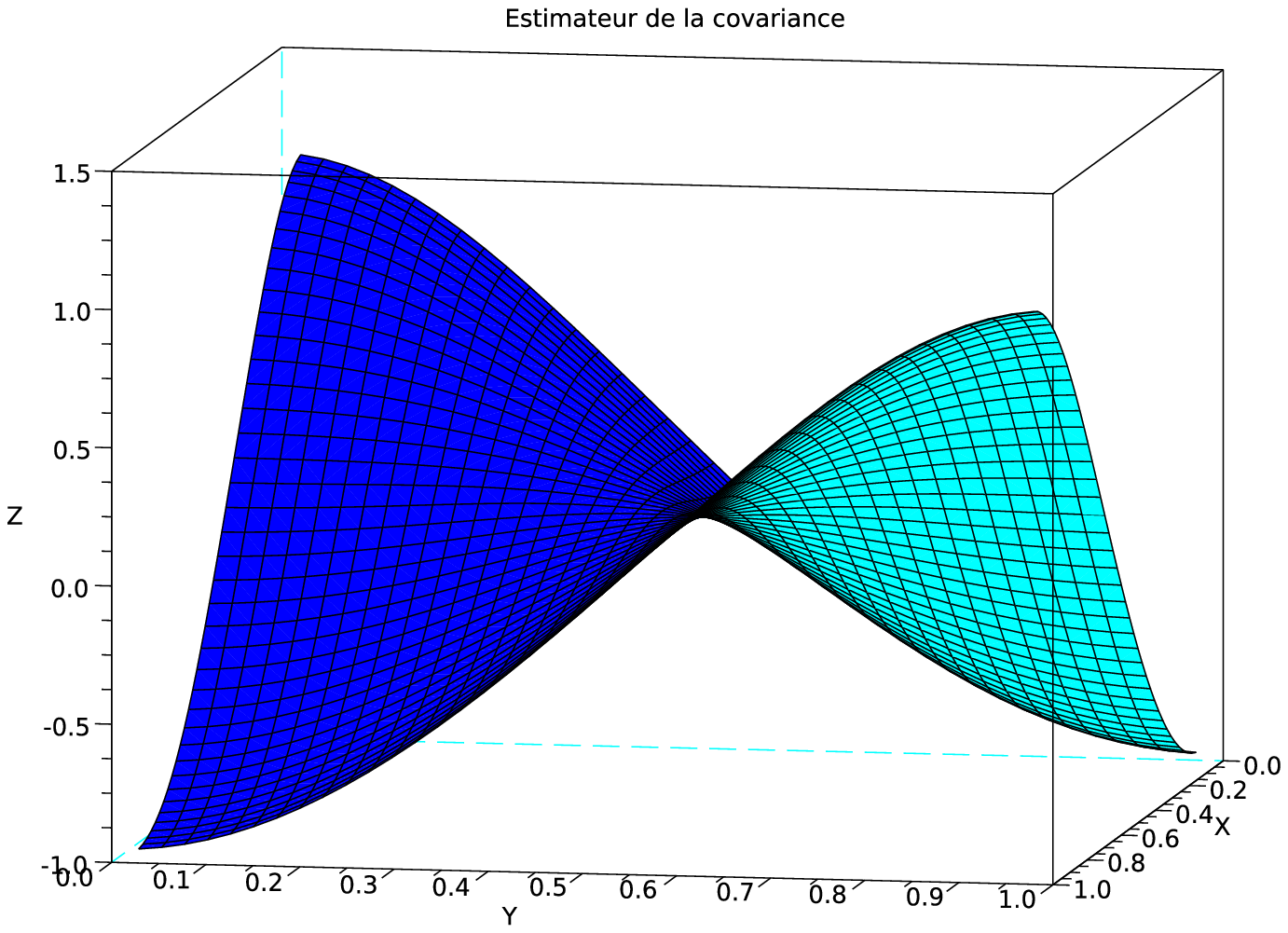}
\end{tabular}

\subsubsection*{Example 3}
Here we consider the case of the brownian bridge with its Karhunen Loeve expansion.
Indeed, this expansion 
$$X(t)= \sum_{\lambda \geqslant 1 } Z_\lambda \sqrt{\nu_\lambda}g_\lambda(t)$$
is computed in  \cite{SHORWELL}, p.213-215 : $\nu_\lambda= \left( \frac{1}{\lambda \pi}\right) ^2$, and $g_\lambda(t)=\sqrt{2}\sin(\lambda \pi t)$.

The covariance function of the brownian bridge is $K(s,t)=s(1-t)$ for $s\leqslant t$.
Simulate the sample is the same as simulate $n$ gaussian vectors of covariance matrix $\Sigma=(K(t_i,t_j))_{1\leqslant i, j \leqslant p}$.
 Here $n=100$, $p=35$ and $M=20$. We found $m_0=5=\hat{m}$.

\begin{tabular}{c c}
\includegraphics[width=7 cm]  {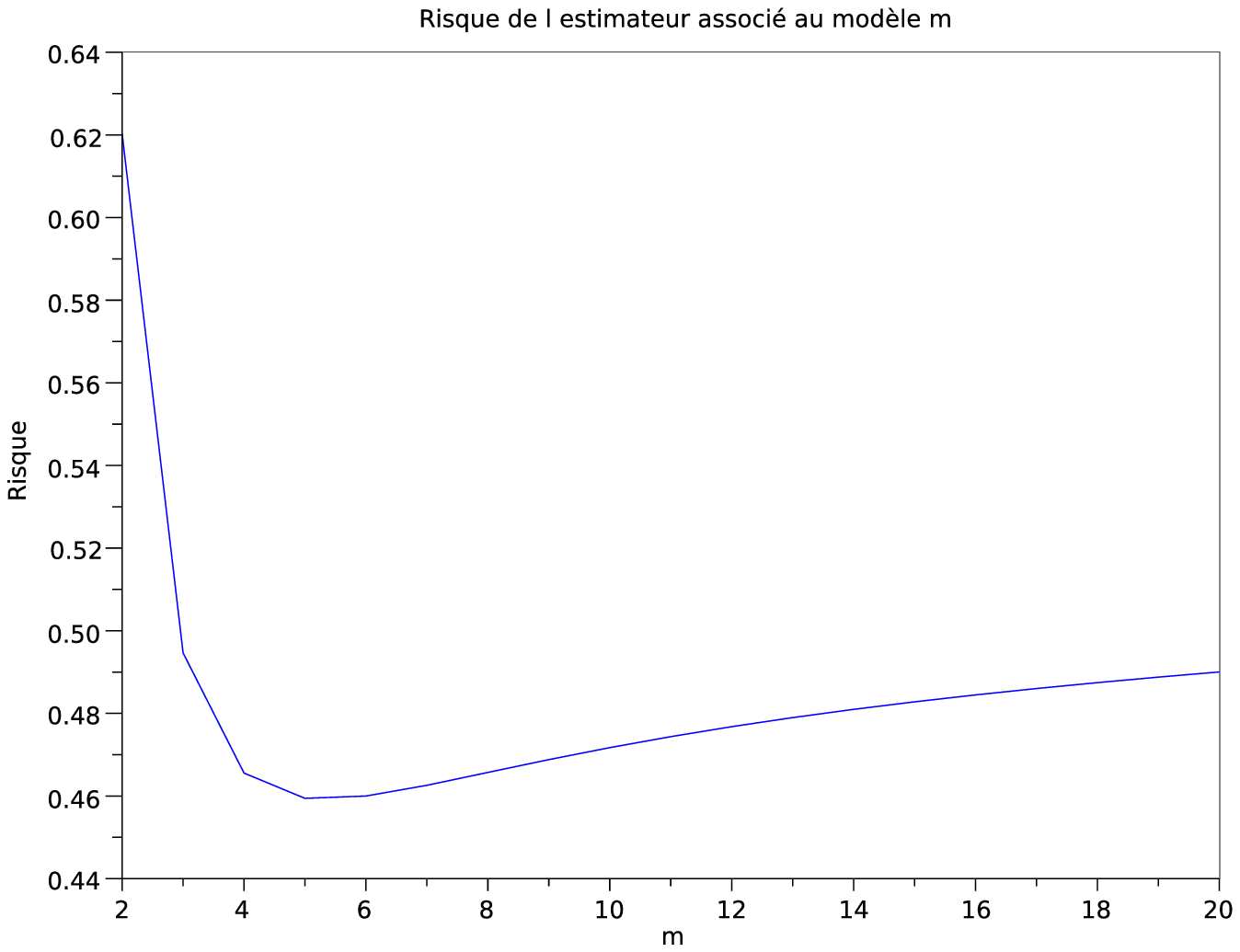} &
\includegraphics[width=7 cm]  {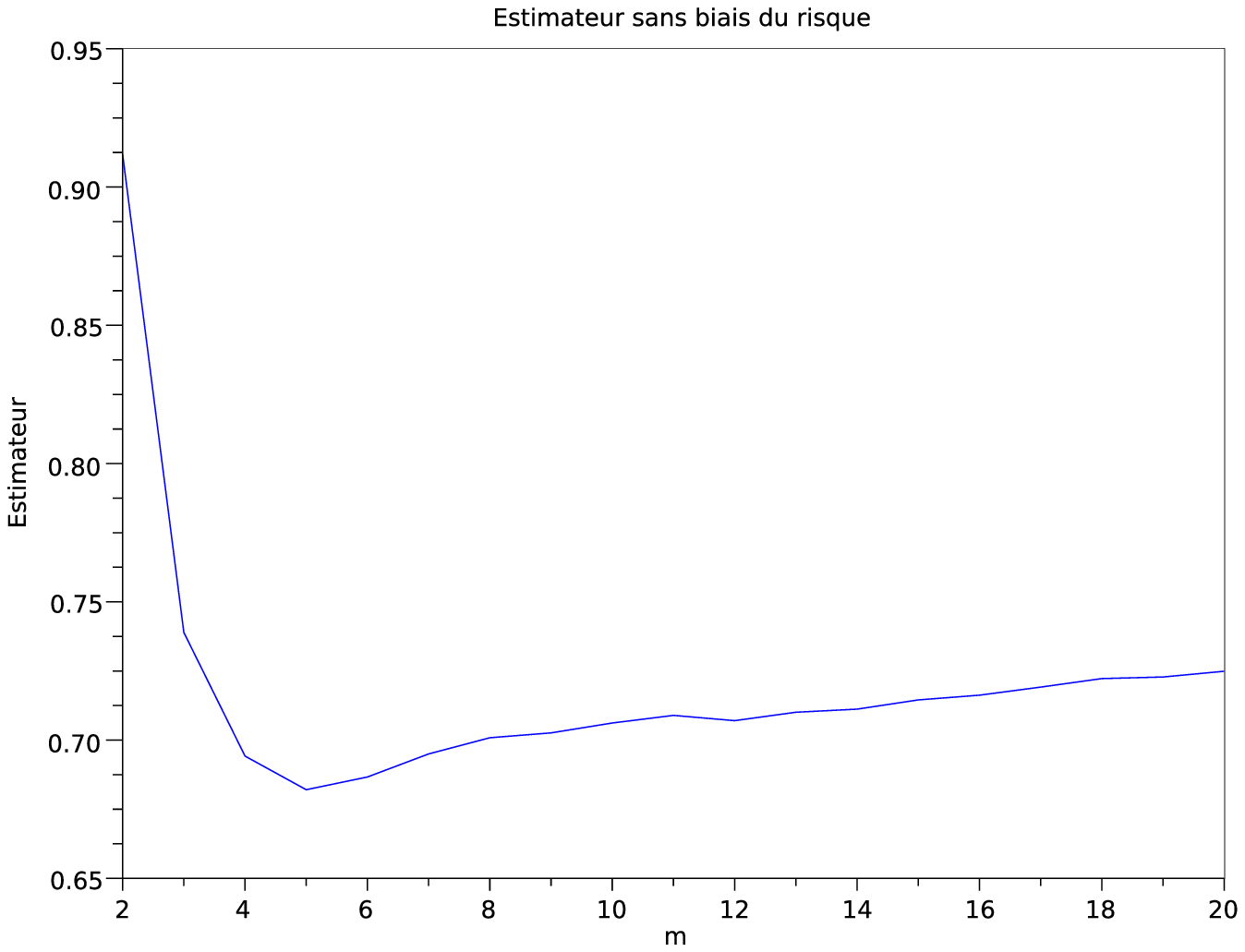} \\
\includegraphics[width=7 cm]  {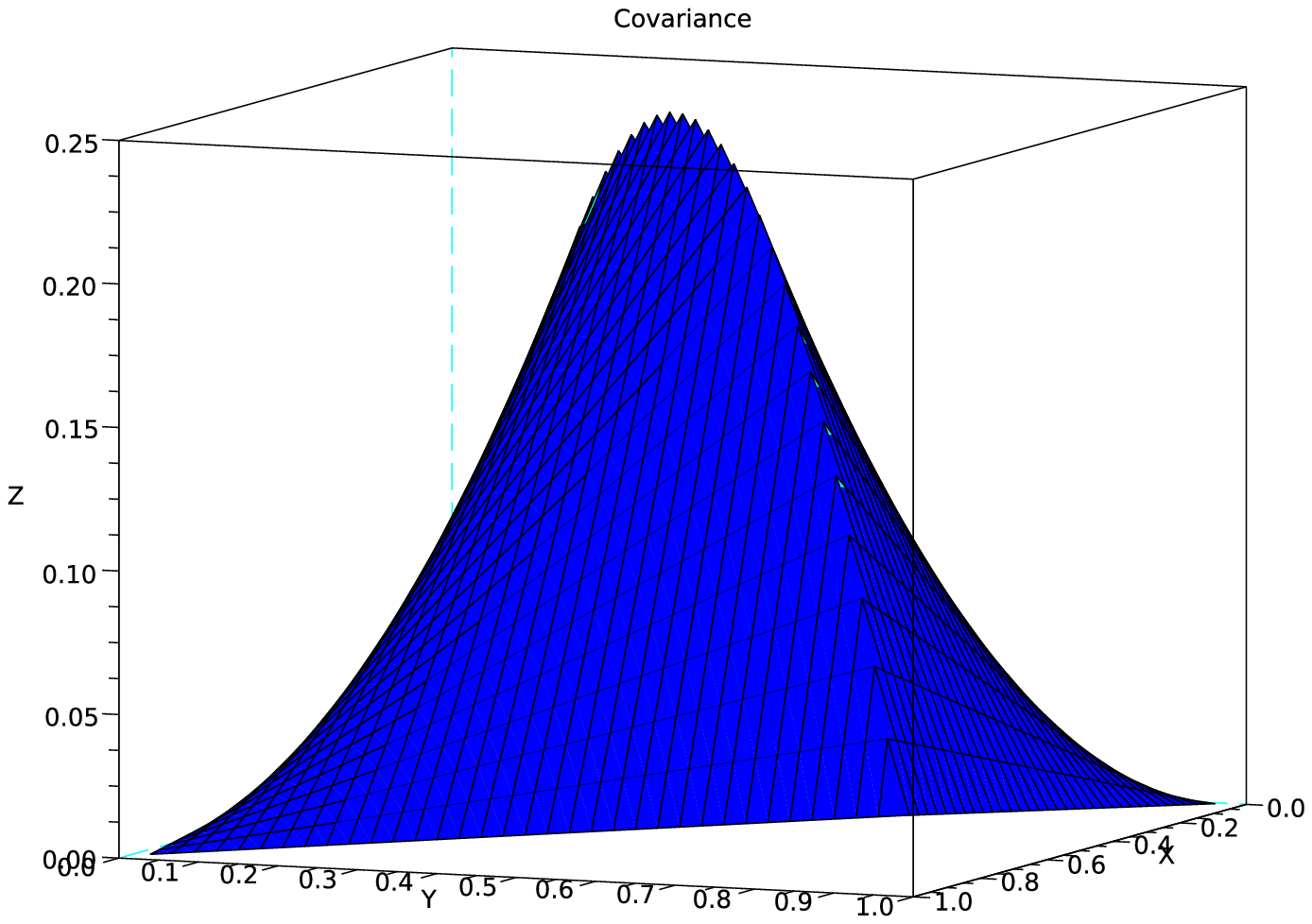}&
\includegraphics[width=7 cm]  {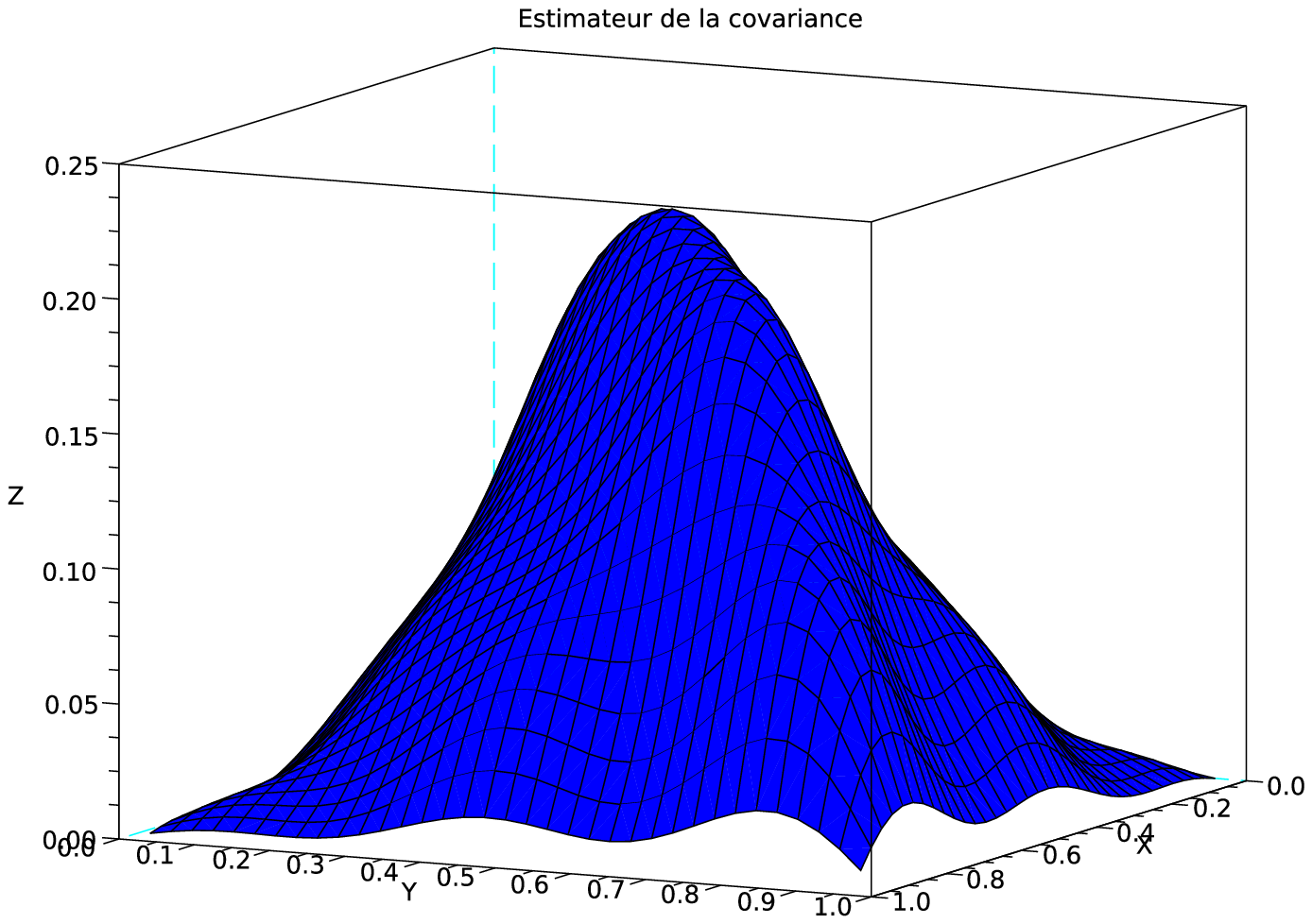}
\end{tabular}

Note that for the first and the last example,the size of the sample is not so large. However, for each of the simulated examples, the covariance estimator shows good performances. Indeed, the procedure introduced in this paper leads each time to the selection of  the best model, in the sense that  the  chosen model minimizes the risk.

\section{Appendix}

Recall that $\Sigma_m= \Pi_m \Sigma \Pi_m$, $\hat{\Sigma}_m= \Pi_m S \Pi_m$ and
$$\hat{\gamma}_m^2= \frac{1}{n-1}\sum_{i=1}^n \left\Vert\Pi_m x_ix_i^\top \Pi_m - \hat{\Sigma}_m \right\Vert^2$$

We start by proving the proposition \ref{eq:risk}.

\begin{proof}
Using the orthogonality, we have
$$\left\Vert \Sigma - \hat{\Sigma}_m \right\Vert^ 2 = \left\Vert \Sigma - \Sigma_m \right\Vert^ 2+\left\Vert \Sigma_m - \hat{\Sigma}_m \right\Vert^ 2$$
With the proposition \ref{rappel} we deduce
$$\left\Vert \Sigma_m - \hat{\Sigma}_m \right\Vert^ 2=\left\Vert vec(\Sigma_m - \hat{\Sigma}_m )\right\Vert^ 2=\left\Vert \left(\Pi_m^\top \otimes \Pi_m\right)vec\left( \Sigma - S\right) \right\Vert^ 2$$
Since $\Pi_m$ is a projection matrix,
$$\left\Vert \Sigma_m - \hat{\Sigma}_m \right\Vert^ 2= tr\left(\left(\Pi_m \otimes \Pi_m\right) vec\left( \Sigma - S\right)vec\left( \Sigma - S\right)^ \top\right)$$

Hence
$$\E \left[ \left\Vert \Sigma - \hat{\Sigma}_m \right\Vert^ 2 \right]=\left\Vert \Sigma - \Sigma_m \right\Vert^ 2 + \E\left[tr\left(\left(\Pi_m \otimes \Pi_m\right) vec\left( \Sigma - S\right)vec\left( \Sigma - S\right)^ \top\right)\right]$$
$$=\left\Vert \Sigma - \Sigma_m \right\Vert^ 2 + tr\left(\left(\Pi_m \otimes \Pi_m\right) \E\left[vec\left( \Sigma - S\right)vec\left( \Sigma - S\right)^ \top\right]\right)$$

$$=\left\Vert \Sigma - \Sigma_m \right\Vert^ 2 + \frac{tr\left(\left(\Pi_m \otimes \Pi_m\right) \E\left[vec\left( \Sigma - xx^\top\right)vec\left( \Sigma - xx^\top\right)^ \top\right]\right)}{n}$$
\end{proof}

\textbf{Proof of Proposition \ref{ure}.}

\begin{proof}

We start by the proof of the following lemma
\begin{lem}
$\hat{\gamma}_m^2$ is an unbiased estimator of $tr\left(  \left( \Pi_m \otimes \Pi_m \right)  \Phi\right)$.
\end{lem}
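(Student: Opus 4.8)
The plan is to recognize $\hat\gamma_m^2$ as, up to the usual $n-1$ normalization, the trace of the empirical covariance matrix of the i.i.d.\ random vectors obtained by vectorizing the matrices $\Pi_m x_i x_i^\top \Pi_m$, and then to invoke the classical unbiasedness of the sample variance together with the Kronecker identities of Proposition~\ref{rappel}.

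First I would set $y_i = vec\left(\Pi_m x_i x_i^\top \Pi_m\right)$. Since $\Pi_m$ is symmetric, \eqref{rap:eq1} gives $y_i = \left(\Pi_m \otimes \Pi_m\right) vec\left(x_i x_i^\top\right)$, so the $y_i$ are i.i.d.\ and, because $\hat\Sigma_m = \frac1n \sum_{i=1}^n \Pi_m x_i x_i^\top \Pi_m$, their average is $\bar y := \frac1n \sum_{i=1}^n y_i = vec\left(\hat\Sigma_m\right)$. By \eqref{rap:eq2} we then have the identity
$$\hat\gamma_m^2 = \frac{1}{n-1}\sum_{i=1}^n \left\Vert y_i - \bar y \right\Vert^2 .$$

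Next I would compute $\Gamma := Var(y_1)$. Using \eqref{rap:eq4}, the symmetry of $\Pi_m$, and $\Phi = Var\left(vec\left(x x^\top\right)\right)$, one gets $\Gamma = \left(\Pi_m \otimes \Pi_m\right)\Phi\left(\Pi_m \otimes \Pi_m\right)$. Then, writing $\mu = \E[y_1]$ and expanding $\sum_{i=1}^n \left\Vert y_i - \bar y\right\Vert^2 = \sum_{i=1}^n \left\Vert y_i\right\Vert^2 - n\left\Vert \bar y\right\Vert^2$, I would take expectations using $\E\left[\left\Vert y_i\right\Vert^2\right] = tr(\Gamma) + \left\Vert \mu\right\Vert^2$ and $\E\left[\left\Vert \bar y\right\Vert^2\right] = \frac1n tr(\Gamma) + \left\Vert \mu\right\Vert^2$; the mean terms cancel and I am left with $\E\left[\hat\gamma_m^2\right] = tr(\Gamma)$.

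Finally, by cyclicity of the trace, \eqref{rap:eq3}, and the fact that $\Pi_m$ is idempotent,
$$tr(\Gamma) = tr\left(\left(\Pi_m\otimes\Pi_m\right)\left(\Pi_m\otimes\Pi_m\right)\Phi\right) = tr\left(\left(\Pi_m\otimes\Pi_m\right)\Phi\right),$$
which is exactly the assertion of the lemma. I do not expect a genuine obstacle; the only points requiring care are the bookkeeping with the Kronecker identities and checking that the centering term in $\hat\gamma_m^2$ is precisely $vec\left(\hat\Sigma_m\right)$, since it is this exact matching that makes the $n-1$ normalization yield an (exactly, not merely asymptotically) unbiased estimator.
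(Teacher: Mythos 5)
Your proof is correct and follows essentially the same route as the paper: vectorize via the Kronecker identities of Proposition~\ref{rappel}, identify $\hat{\gamma}_m^2$ as $\frac{1}{n-1}\sum_{i}\Vert y_i-\bar{y}\Vert^2$ for the i.i.d.\ vectors $y_i=\left(\Pi_m\otimes\Pi_m\right)vec\left(x_ix_i^\top\right)$ with $\bar{y}=vec\bigl(\hat{\Sigma}_m\bigr)$, and reduce to the classical unbiasedness of the empirical covariance, cleaning up the extra projection factors by cyclicity of the trace and idempotence of $\Pi_m$. The only (cosmetic) difference is how that unbiasedness is established: you use the expansion $\sum_{i}\Vert y_i-\bar{y}\Vert^2=\sum_{i}\Vert y_i\Vert^2-n\Vert\bar{y}\Vert^2$ together with $\E\bigl[\Vert y_i\Vert^2\bigr]=tr(\Gamma)+\Vert\mu\Vert^2$, whereas the paper derives $\E\bigl[\left(v_i-\bar{v}\right)\left(v_i-\bar{v}\right)^\top\bigr]=\frac{n-1}{n}V$ by a pairwise-difference computation.
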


\begin{proof}
We deduce from the proposition \ref{rappel} and the fact that $\Pi_m$ is a projection matrix that:
$$\left( n-1\right)  \mathbb{E}\left[   \hat{\gamma}_m^2\right]  =\sum_{i=1}^n \mathbb{E}\left[    \left\Vert vec\left( \Pi_m x_ix_i^\top \Pi_m\right)   - vec\left( \hat{\Sigma}_m\right)   \right\Vert^2\right]  $$
$$=\sum_{i=1}^n \mathbb{E}\left[    \left\Vert\left( \Pi_m \otimes \Pi_m\right)   \left(  vec\left(  x_ix_i^\top \right)   - vec\left( S\right)  \right)   \right\Vert^2\right]  $$
$$=\sum_{i=1}^n \mathbb{E}\left[   tr\left(  \left( \Pi_m \otimes \Pi_m\right)   \left(  vec\left(  x_ix_i^\top \right)   - vec\left( S\right)  \right)  \left(  vec\left(  x_ix_i^\top \right)   - vec\left( S\right)  \right)  ^\top \left( \Pi_m \otimes \Pi_m\right)  ^\top\right)  \right]  $$

$$=\sum_{i=1}^n tr\left( \left( \Pi_m \otimes \Pi_m\right)   \mathbb{E}\left[    \left(  vec\left(  x_ix_i^\top \right)   - vec\left( S\right)  \right)  \left(  vec\left(  x_ix_i^\top \right)   - vec\left( S\right)  \right)  ^\top \right]   \right)    $$

But if $\left( v_i\right)  _{1 \leqslant i \leqslant n}$, are some i.i.d. vectors with covariance matrix $V$ and mean $\bar{v}= \frac{1}{n}\sum_{i=1}^n v_i$, we have
$$\mathbb{E}\left[   \left( v_i-\bar{v}\right)  \left( v_i-\bar{v}\right)  ^\top\right]  =\frac{1}{n^2}\sum_{j,k=1}^n \mathbb{E}\left[   \left( v_i-v_k\right)  \left( v_i-v_j\right)  ^\top\right]  $$
$$=\frac{1}{n^2}\sum_{\substack{j,k=1 \\ j,k \neq i}}^n \mathbb{E}\left[   \left( v_i-v_k\right)  \left( v_i-v_j\right)  ^\top\right]  $$
$$=\frac{1}{n^2}\left\lbrace \left( n-1\right)  \mathbb{E}\left[   \left( v_1-v_2\right)  \left( v_1-v_2\right)  ^\top\right]   + \left( n-2\right)  \left( n-1\right)  \mathbb{E}\left[   \left( v_1-v_2\right)  \left( v_1-v_3\right)  ^\top\right]   \right\rbrace $$
$$=\frac{1}{n^2}\left\lbrace \left( n-1\right)  2V +  \left( n-2\right)  \left( n-1\right)    V\right\rbrace$$
$$=\frac{1}{n^2}\left( \left( n-1\right)  nV\right)  $$
Hence
$$\mathbb{E}\left[   \left( v_i-\bar{v}\right)  \left( v_i-\bar{v}\right)  ^\top\right]  =\frac{1}{n}\left( \left( n-1\right)  V\right)  $$
this identity gives 
$$\left( n-1\right)  \mathbb{E}\left[   \hat{\gamma}_m^2\right]  =\sum_{i=1}^n tr\left( \left( \Pi_m \otimes \Pi_m\right)  \frac{1}{n}\left( \left( n-1\right)  \Phi\right)  \right)    $$
Finally

$$\mathbb{E}\left[   \hat{\gamma}_m^2\right]  =tr\left(  \left( \Pi_m \otimes \Pi_m\right)  \Phi\right)  $$

\end{proof}

Now, it remains to show that 
$$\mathbb{E}\left[    \left\Vert S- \hat{\Sigma}_m\right\Vert^ 2\right]   = \left\Vert \Sigma - \Pi_m \Sigma \Pi_m\right\Vert^2 - \frac{tr\left( \left(  \Pi_m \otimes \Pi_m\right)   \Phi\right)  }{n} +  \frac{tr\left( \Phi\right)  }{n}$$

We have that
$$\left\Vert S- \hat{\Sigma}_m\right\Vert^ 2 = \left\Vert S- \Sigma\right\Vert^ 2+2\left\langle  S- \Sigma, \Sigma - \hat{\Sigma}_m\right\rangle   + \left\Vert\Sigma- \hat{\Sigma}_m\right\Vert^ 2$$
And using the orthogonality we deduce that
$$\left\Vert S- \hat{\Sigma}_m\right\Vert^ 2 = \left\Vert S- \Sigma\right\Vert^ 2+2\left\langle  S- \Sigma, \Sigma - \hat{\Sigma}_m\right\rangle   + \left\Vert\Sigma- \Sigma_m\right\Vert^ 2 + \left\Vert\Sigma_m -\hat{\Sigma}_m\right\Vert^ 2$$
For the same reason :
$$\left\langle  S- \Sigma, \Sigma - \hat{\Sigma}_m\right\rangle  =\left\langle   S-\Sigma, \Sigma - \Sigma_m\right\rangle   +\left\langle   S-\Sigma, \Sigma_m - \hat{\Sigma}_m\right\rangle    $$
$$=\left\langle   S-\Sigma, \Sigma - \Sigma_m\right\rangle   - \left\Vert\Sigma_m - \hat{\Sigma}_m\right\Vert^ 2$$
And because the expectation of $S$ is equal to $\Sigma$ we obtain that
$$\mathbb{E}\left[   \left\Vert S- \hat{\Sigma}_m\right\Vert^ 2\right]  = \left\Vert\Sigma- \Sigma_m\right\Vert^ 2 + \mathbb{E}\left[   \left\Vert S- \Sigma\right\Vert^ 2\right]  -\mathbb{E}\left[   \left\Vert\Sigma_m -\hat{\Sigma}_m\right\Vert^ 2\right]  $$

First
$$\mathbb{E}\left[   \left\Vert S- \Sigma\right\Vert^ 2\right]  =\frac{1}{n^ 2} \mathbb{E}\left[   \sum_{i,j=1}^ n \left\langle   x_ix_i^\top - \Sigma, x_jx_j^\top - \Sigma\right\rangle  \right]  =\frac{1}{n}\mathbb{E}\left[   \left\Vert xx^\top-\Sigma\right\Vert^2\right]  $$
And with the properties of the Frobenius norm
$$\mathbb{E}\left[   \left\Vert xx^\top-\Sigma\right\Vert^2\right]  = \mathbb{E}\left[   \left\Vert vec\left( xx^\top-\Sigma\right)   \right\Vert^ 2\right]  $$
$$= tr\left(\mathbb{E}\left[   \left( vec\left( xx^\top\right)   - vec\left( \Sigma\right)  \right)  \left( vec\left( xx^\top\right)   - vec\left( \Sigma\right)  \right)  ^\top\right]  \right)$$
then we derive that
$$\mathbb{E}\left[   \left\Vert xx^\top-\Sigma\right\Vert^2\right]  =tr\left( \Phi\right)  $$
Thus
\begin{equation}
\label{eq0}
\mathbb{E}\left[   \left\Vert S- \Sigma\right\Vert^ 2\right]  =\frac{tr\left( \Phi\right)  }{n}
\end{equation}

Second
$$\mathbb{E}\left[   \left\Vert\Sigma_m -\hat{\Sigma}_m\right\Vert^ 2\right]  = \frac{1}{n^ 2} \mathbb{E}\left[   \sum_{i,j=1}^ n \left\langle  \Pi_m\left(  x_ix_i^\top - \Sigma\right)  \Pi_m, \Pi_m\left( x_jx_j^\top - \Sigma\right)  \Pi_m\right\rangle  \right]  $$
$$=\frac{1}{n} \mathbb{E}\left[    \left\langle  \Pi_m\left(  xx^\top - \Sigma\right)  \Pi_m, \Pi_m\left( xx^\top - \Sigma\right)  \Pi_m\right\rangle  \right]   =\frac{1}{n}\mathbb{E}\left[   \left\Vert\Pi_m\left(  xx^\top - \Sigma\right)  \Pi_m\right\Vert^ 2\right]   $$

And using the proposition \ref{rappel} and the specificity of $\Pi_m$, we obtain that

$$\mathbb{E}\left[   \left\Vert \Pi_m\left(  xx^\top - \Sigma\right)  \Pi_m\right\Vert^ 2\right]   = \mathbb{E}\left[   \left\Vert vec\left( \Pi_m\left(  xx^\top - \Sigma \right)  \Pi_m\right)\right\Vert^ 2\right]   $$
$$=\mathbb{E}\left[   \left\Vert\Pi_m\otimes \Pi_m\left( vec\left( xx^\top - \Sigma\right)  \right)  \right\Vert^2\right]  $$ 
$$=\mathbb{E}\left[    tr \left( \Pi_m\otimes \Pi_m\left( vec\left( xx^\top - \Sigma\right)  \left( vec\left( xx^\top - \Sigma\right)  \right)  ^\top\left( \Pi_m\otimes \Pi_m\right)  ^\top\right)     \right)\right]  $$

$$=\mathbb{E}\left[   tr\left( \left( \Pi_m\otimes \Pi_m\right) \left( vec\left( xx^\top - \Sigma\right) \right) \left( vec\left( xx^\top - \Sigma\right)\right)  ^\top \right) \right]  $$
Hence
$$\mathbb{E}\left[   \left\Vert\Pi_m\left(  xx^\top - \Sigma\right)  \Pi_m\right\Vert^ 2\right]   = tr\left( \left( \Pi_m\otimes \Pi_m\right)   \Phi\right)  $$
And we obtain
\begin{equation}
\mathbb{E}\left[   \left\Vert\Sigma_m -\hat{\Sigma}_m\right\Vert^ 2\right]  =\frac{tr\left( \left( \Pi_m\otimes \Pi_m \right)  \Phi\right)  }{n}
\end{equation}

Finally, we have
$$\mathbb{E}\left[    \left\Vert S- \hat{\Sigma}_m\right\Vert^ 2 \right]  =\left\Vert\Sigma - \Sigma_m\right\Vert^2 - \frac{tr\left( \left( \Pi_m \otimes \Pi_m\right)   \Phi\right)  }{n}+\frac{tr\left( \Phi\right)  }{n}$$


\end{proof}

\textbf{Proof of Proposition \ref{oracle}.}

\begin{proof}
As $\frac{\hat{\gamma}_m^2}{n}\geqslant 0$, we have: 
$$\mathbb{E}\left[   \left\Vert\hat{\Sigma}- \Sigma \right\Vert^2\right]   \leqslant \mathbb{E}\left[   \left\Vert\hat{\Sigma}- S \right\Vert^2+ 2 \frac{\hat{\gamma}_{\hat{m}}^2}{n}\right]   +2 \mathbb{E}\left[   \left\langle  \hat{\Sigma}- S,S- \Sigma \right\rangle  \right]   +\mathbb{E}\left[   \left\Vert S- \Sigma \right\Vert^2 \right]  $$
Let $m_0 \in \underset{ m\in \mathcal{M}}{argmin} \mathbb{E}\left[   \left\Vert \Sigma -\hat{\Sigma}_m \right\Vert^ 2\right]  $ an oracle.
By definition of $\hat{m}$, 
$$\left\Vert S- \hat{\Sigma}_{\hat{m}}\right\Vert^ 2 + 2 \frac{\hat{\gamma}_{\hat{m}}^2}{n} \leqslant \left\Vert S- \hat{\Sigma}_{m_0}\right\Vert^ 2 + 2 \frac{\hat{\gamma}_{m_0}^2}{n}$$
Then
$$\mathbb{E}\left[   \left\Vert\hat{\Sigma}- \Sigma \right\Vert^2\right]   \leqslant \mathbb{E}\left[   \left\Vert S- \hat{\Sigma}_{m_0}\right\Vert^ 2 + 2 \frac{\hat{\gamma}_{m_0}^2}{n}\right]   + \mathbb{E}\left[   \left\Vert S- \Sigma \right\Vert^2 \right]   + 2 \mathbb{E}\left[   \left\langle  \hat{\Sigma}- S,S- \Sigma \right\rangle  \right]  $$
we derive from the previous proposition and \eqref{eq0}
$$\mathbb{E}\left[   \left\Vert\hat{\Sigma}- \Sigma \right\Vert^2\right]   \leqslant \mathbb{E}\left[   \left\Vert \Sigma -\hat{\Sigma}_{m_0} \right\Vert^ 2\right]   + 2\frac{tr\left( \Phi\right)  }{n} + 2 \mathbb{E}\left[   \left\langle  \hat{\Sigma}- S,S- \Sigma \right\rangle  \right]  $$
Moreover by the Cauchy-Schwarz inequality we have that
$$\left\langle  \hat{\Sigma}- S,S- \Sigma \right\rangle  \leqslant \left\Vert\hat{\Sigma}- S \right\Vert \left\Vert S- \Sigma\right\Vert$$
And using again this inequality
$$\mathbb{E}\left[   \left\langle  \hat{\Sigma}- S,S- \Sigma \right\rangle  \right]  \leqslant \sqrt{\mathbb{E}\left[   \left\Vert\hat{\Sigma}- S\right\Vert^2\right]  }\sqrt{\mathbb{E}\left[   \left\Vert S- \Sigma\right\Vert^2\right]  }$$
$$\leqslant \sqrt{\mathbb{E}\left[   \left\Vert\hat{\Sigma}- S\right\Vert^2 + 2\frac{\hat{\gamma}_{\hat{m}}^2}{n}\right]  }\sqrt{\frac{tr\left( \Phi\right)  }{n}}$$
For the same reasons as before we obtain
$$\mathbb{E}\left[   \left\langle  \hat{\Sigma}- S,S- \Sigma \right\rangle  \right]  \leqslant \sqrt{ \mathbb{E}\left[   \left\Vert \Sigma- \hat{\Sigma}_{m_0}\right\Vert^ 2\right]+\frac{tr\left( \Phi\right)  }{n}   }\sqrt{\frac{tr\left( \Phi\right)  }{n}  }$$
$$\leqslant \frac{tr\left( \Phi\right)  }{n} + \sqrt{\mathbb{E}\left[   \left\Vert \Sigma - \hat{\Sigma}_{m_0}\right\Vert^ 2\right]  }\sqrt{\frac{tr\left( \Phi\right)  }{n}}$$
Thus
$$\mathbb{E}\left[   \left\Vert\hat{\Sigma}- \Sigma \right\Vert^2\right]   \leqslant \mathbb{E}\left[   \left\Vert \Sigma -\hat{\Sigma}_{m_0} \right\Vert^ 2\right]   +4\frac{tr\left( \Phi\right)  }{n} + 2\sqrt{\mathbb{E}\left[   \left\Vert S- \hat{\Sigma}_{m_0}\right\Vert^ 2\right]  }\sqrt{\frac{tr\left( \Phi\right)  }{n}}$$
With the following inequality which holds $\forall a$, $ b\in \mathbb{R}$ et $\forall A>0$
$$2ab \leqslant \frac{a^2}{A} + A b^2$$
We obtain for all $A>0$:
$$\mathbb{E}\left[   \left\Vert\hat{\Sigma}- \Sigma \right\Vert^2\right]   \leqslant \mathbb{E}\left[   \left\Vert \Sigma -\hat{\Sigma}_{m_0} \right\Vert^ 2\right]  \left( 1+A^{-1}\right)   + \frac{tr\left( \Phi\right)  }{n}\left( 4+A\right)  $$
The definition of $m_0$ gives the result.
\end{proof}

\bibliographystyle{alpha}
\bibliography{base}

\end{document}